\newtheorem*{rep@theorem}{\rep@title}
\newcommand{\newreptheorem}[2]{%
\newenvironment{rep#1}[1]{%
 \def\rep@title{#2 \ref{##1} (restated)}%
 \begin{rep@theorem}}%
 {\end{rep@theorem}}}
\newtheorem{theorem}{Theorem}
\newtheorem*{theorem*}{Theorem}
\newtheorem{lemma}[theorem]{Lemma}
\newtheorem{proposition}[theorem]{Proposition}
\newtheorem{corollary}[theorem]{Corollary}
\newtheorem*{conjecture*}{Conjecture}
\theoremstyle{definition}
\theoremstyle{remark}
\newtheorem*{remark}{Remark}
\DeclareMathOperator{\coker}{coker}
\numberwithin{equation}{section}
\begin{document}

\title{Bornes de torsion et un th\'eor\`eme effectif du pgcd}

\author{Hyuk Jun Kweon}
\address{Department of Mathematics, University of Georgia, USA}
\email{kweon@uga.edu}

\author{Madhavan Venkatesh}
\address{Department of Computer Science \& Engineering, IIT Kanpur, India}

\email{madhavan@cse.iitk.ac.in}

\date{}

\dedicatory{}

\begin{abstract}
We prove an effective, probabilistic version of Deligne's `th\'eor\`eme du pgcd' for a smooth, projective, geometrically integral (\textit{nice}) variety $X_{0}\subset \mathbb{P}^{N}$ over $\mathbb{F}_{q}$ of dimension $n$ and degree $D$, obtained via good reduction from a nice variety $\mathcal{X}_{0}$ over a number field $K$ at a prime $\mathfrak{p}\subset \mathcal{O}_{K}$. The main ingredients include bounding torsion in the Betti cohomology of $\mathcal{X}_{0}$, a mod -- $\ell$ big monodromy result and equidistribution of Frobenius in the representation associated to the sheaf of vanishing cycles modulo $\ell$.
\end{abstract}

\maketitle

\section{Introduction}
 Given a nice variety $X_0$ over $\mathbb{F}_{q}$ of dimension $n$ and degree $D$ obtained via good reduction, we write $X:=X_{0}\otimes \overline{\mathbb{F}}_{q}$ and $P_{i}(X/\mathbb{F}_{q}, T):=\det\left(1-TF_{q}^{\star} \ \vert \ \mathrm{H}^{i}(X, \mathbb{Q}_{\ell})\right)$, where $\ell$ is a prime not dividing $q$. Let $(X_{t})_{t\in\mathbb{P}^{1}}$ be a Lefschetz pencil of hyperplane sections\footnote{obtained after taking a Veronese re-embedding of degree 3}  on $X$. Denote by $Z\subset \mathbb{P}^{1}$ the finite set of nodal fibres and by $U=\mathbb{P}^{1}\setminus Z$, the subscheme parameterising the smooth fibres. As a consequence of the hard-Lefschetz theorem for $X$, Deligne \cite[Th\'eor\`eme 4.5.1]{Weilii} showed the following.

\begin{theorem*}
     The polynomial $P_{n-1}(X/\mathbb{F}_{q}, T)$ is the least common multiple of all polynomials $$f(T)=\prod_{j}(1-\alpha_{j}T)\in \mathbb{C}[T],$$ satisfying the condition that for any $t\in U(\mathbb{F}_{q^{r}})$, the polynomial  $$f(T)^{(r)}:=\prod_{j}(1-\alpha_{j}^{r}T)$$ divides $P_{n-1}(X_{t}/\mathbb{F}_{q^{r}}, T)$.
\end{theorem*}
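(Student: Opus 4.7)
The plan is to prove the two divisibilities comprising the lcm equality. The easy direction -- that $P_{n-1}(X/\mathbb{F}_{q},T)$ itself lies in the admissible family -- follows from weak Lefschetz, which gives a Galois-equivariant injection $\mathrm{H}^{n-1}(X,\mathbb{Q}_{\ell})\hookrightarrow \mathrm{H}^{n-1}(X_{t},\mathbb{Q}_{\ell})$ for every smooth fibre; specializing at $t\in U(\mathbb{F}_{q^{r}})$ and comparing eigenvalues of $F_{q^{r}}^{\star}=(F_{q}^{\star})^{r}$ shows $P_{n-1}(X/\mathbb{F}_{q},T)^{(r)}\mid P_{n-1}(X_{t}/\mathbb{F}_{q^{r}},T)$.

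For the reverse, I would exploit the Lefschetz pencil. Let $\pi:\widetilde{X}\to\mathbb{P}^{1}$ be the blow-up of $X$ along the pencil's base locus and let $\mathcal{F}:=R^{n-1}\pi_{\ast}\mathbb{Q}_{\ell}|_{U}$ be the lisse sheaf on $U$ whose stalk at $t$ is $\mathrm{H}^{n-1}(X_{t},\mathbb{Q}_{\ell})$. Combining hard Lefschetz with Deligne's global invariant cycles theorem yields a Frobenius-equivariant decomposition
$$\mathcal{F}\;=\;\mathcal{F}^{\mathrm{inv}}\oplus\mathcal{V},$$
where $\mathcal{F}^{\mathrm{inv}}$ is the constant sheaf with stalk $\mathrm{H}^{n-1}(X,\mathbb{Q}_{\ell})$ and $\mathcal{V}$ is the sheaf of vanishing cycles. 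For every $t\in U(\mathbb{F}_{q^{r}})$ this gives
$$P_{n-1}(X_{t}/\mathbb{F}_{q^{r}},T)\;=\;P_{n-1}(X/\mathbb{F}_{q},T)^{(r)}\cdot\det\bigl(1-TF_{q^{r}}^{\star}\mid\mathcal{V}_{t}\bigr).$$

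Given an admissible $f$, I would factor $f=f_{1}f_{2}$ with $f_{1}\mid P_{n-1}(X/\mathbb{F}_{q},T)$ and $\gcd(f_{2},P_{n-1}(X/\mathbb{F}_{q},T))=1$; it suffices to show $f_{2}=1$. The essential geometric input is the Picard--Lefschetz formula, which shows that the geometric monodromy of $\mathcal{V}$ is generated by vanishing-cycle transvections and therefore has open image in the full symplectic or orthogonal group of $\mathcal{V}_{\bar{t}}$ (according to the parity of $n-1$); in particular $\mathcal{V}$ is geometrically absolutely irreducible and $\mathrm{H}^{0}(U_{\overline{\mathbb{F}}_{q}},\mathcal{V})=0$.

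A Chebotarev density argument in the arithmetic monodromy group then completes the proof: if some root $\alpha$ of $f_{2}$ were a Frobenius eigenvalue on every $\mathcal{V}_{t}$ for every $r$ and every $t\in U(\mathbb{F}_{q^{r}})$, then by density $\alpha$ would be an eigenvalue of every element of the arithmetic monodromy group, producing a nonzero Frobenius-invariant line in a suitable twist of $\mathcal{V}$. The invariant cycles theorem would force this line into the constant summand $\mathcal{F}^{\mathrm{inv}}$, contradicting $\gcd(f_{2},P_{n-1}(X/\mathbb{F}_{q},T))=1$. The main obstacle I anticipate is making this density argument uniform across all $r$ simultaneously while keeping the arithmetic-versus-geometric monodromy discrepancy under control; in practice this is cleanest via Grothendieck's trace formula on $U$ applied to $\mathcal{V}$ and its symmetric and tensor powers, which converts the pointwise divisibility hypothesis on $f_{2}$ into a global cohomological statement.
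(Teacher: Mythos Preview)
The paper does not contain a proof of this statement: it is quoted as Deligne's \emph{th\'eor\`eme du pgcd} and attributed to \cite[Th\'eor\`eme~4.5.1]{Weilii}, with only the one-line indication that it is ``a consequence of the hard-Lefschetz theorem for $X$.'' So there is no in-paper argument to compare against; the paper's own contribution is the \emph{effective} Theorem~\ref{thm:main}, whose proof proceeds by an entirely different route (torsion bounds, big mod--$\ell$ monodromy, and Katz's equidistribution) and does not re-derive Deligne's qualitative result.

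Evaluated on its own terms, your sketch captures the correct architecture of Deligne's proof: the easy divisibility via weak Lefschetz, and the splitting $\mathcal{F}\simeq\mathcal{F}^{\mathrm{inv}}\oplus\mathcal{V}$ coming from hard Lefschetz, reducing the problem to showing that the vanishing-cycle sheaf admits no nontrivial ``universal'' factor. Where your outline becomes genuinely incomplete is the last step. The inference ``$\alpha^{r}$ is an eigenvalue of every $\rho(F_{q^{r},t})$, hence by density $\alpha$ is an eigenvalue of every element of the arithmetic monodromy group, hence there is an invariant line in a twist of $\mathcal{V}$'' contains two gaps. First, the set of group elements having $\alpha$ as an eigenvalue is closed but the density statement you need runs over \emph{all} degrees $r$ with the eigenvalue changing as $\alpha^{r}$, so one must first package $\alpha$ into a rank-one object (a character of the arithmetic $\pi_{1}$ or a Weil sheaf of rank one) before invoking density; this already uses that $|\alpha|$ is a half-integral power of $q$, which you have not justified. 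Second, even granting a common eigenvalue for every group element, that alone does not produce an invariant line---the eigenspaces can move---so one needs an additional argument (for instance, semisimplicity of $\mathcal{V}$ plus Schur, or the vanishing $\mathrm{H}^{0}(U_{\overline{\mathbb{F}}_{q}},\mathcal{V}\otimes\mathcal{L}_{\alpha}^{-1})=0$). Your closing gesture toward ``trace formula on symmetric and tensor powers'' does not supply this; Deligne's actual argument is shorter and does not pass through tensor powers.
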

\begin{remark}
    Deligne's theorem even holds without assuming that $X$ can be lifted to characteristic zero.
\end{remark}
Treating the embedding dimension $N$ as constant, our main result is as follows.
\begin{theorem}
\label{thm:main}
   There exists a polynomial $\Phi(x)\in \mathbb{Z}[x]$ independent of $\deg X=D$, such that for any extension $\mathbb{F}_{Q}/ \mathbb{F}_{q}$ with
    $$
    [\mathbb{F}_Q: \mathbb{F}_{q}] > \Phi(D),
    $$
    we have for any $u_{1}, u_{2}\in U(\mathbb{F}_{Q})$ chosen uniformly at random,
    $$
    P_{n-1}(X/ \mathbb{F}_{Q}, T)=\mathrm{gcd}\left(P_{n-1}(X_{u_{1}}/\mathbb{F}_{Q}, T),  P_{n-1}(X_{u_{2}}/\mathbb{F}_{Q}, T) \right);
    $$
    with probability $>2/3$.
\end{theorem}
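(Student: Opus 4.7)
The plan is to decompose the Frobenius action on each fibre using the Lefschetz pencil, isolate the contribution of the vanishing cohomology, and then to show that the residual polynomials at two random points are coprime, using big monodromy together with effective equidistribution.

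Let $f \colon \tilde{X} \to \mathbb{P}^{1}$ be the map obtained after blowing up the base locus of the pencil. The decomposition theorem, combined with the theorem of the invariant part, yields on $U$ a Frobenius-equivariant direct sum
\[
(R^{n-1} f_\ast \mathbb{Q}_\ell)_{|U} \;\simeq\; \underline{H^{n-1}(X,\mathbb{Q}_\ell)} \,\oplus\, \mathcal{F},
\]
where $\mathcal{F}$ is the lisse sheaf of vanishing cohomology. Consequently, for any $t \in U(\mathbb{F}_Q)$,
\[
P_{n-1}(X_{t}/\mathbb{F}_Q, T) \;=\; P_{n-1}(X/\mathbb{F}_Q, T)\cdot R_{t}(T), \qquad R_{t}(T) := \det\!\bigl(1 - T F_{Q}^{\star}\mid \mathcal{F}_{t}\bigr).
\]
Since $\gcd(P\cdot R_{u_{1}},\, P\cdot R_{u_{2}}) = P\cdot \gcd(R_{u_{1}}, R_{u_{2}})$ with $P = P_{n-1}(X/\mathbb{F}_{Q},T)$, the theorem reduces to proving that $\gcd(R_{u_{1}}, R_{u_{2}}) = 1$ with probability $> 2/3$; equivalently, that $\mathrm{Frob}_{u_{1}}$ and $\mathrm{Frob}_{u_{2}}$ share no eigenvalue on $\mathcal{F}$.

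Pick a prime $\ell$ coprime to $q$. The integral model of $\mathcal{F}$ provided by the Betti cohomology of the lift $\mathcal{X}_{0}$ carries an action of $\pi_{1}(U, \bar u)$, and the torsion bound announced in the abstract limits the primes $\ell$ at which reduction mod $\ell$ of this integral representation fails to be essentially faithful; the set of such ``bad'' primes is bounded by a polynomial in $D$. For $\ell$ exceeding this polynomial, the mod-$\ell$ big monodromy result yields that
\[
\Gamma_{\ell} := \operatorname{im}\!\bigl(\pi_{1}(U, \bar u) \to \operatorname{GL}(\mathcal{F}/\ell\mathcal{F})\bigr)
\]
contains $\operatorname{Sp}(r,\mathbb{F}_{\ell})$ or $\Omega(r,\mathbb{F}_{\ell})$, with $r = \operatorname{rk}\mathcal{F}$ polynomial in $D$ by B\'ezout applied to hyperplane sections. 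Deligne's equidistribution theorem, applied to the Galois cover of $U$ trivialising $\mathcal{F}/\ell\mathcal{F}$, then bounds, for every conjugation-invariant $C \subseteq \Gamma_{\ell}$,
\[
\Bigl| \Pr_{u \in U(\mathbb{F}_{Q})}[\mathrm{Frob}_{u} \in C] - \tfrac{|C|}{|\Gamma_{\ell}|} \Bigr| \;\ll\; \frac{r\sqrt{|\Gamma_{\ell}|}}{\sqrt{Q}},
\]
while a standard count in the symplectic (or orthogonal) group shows that the proportion of pairs $(g_{1}, g_{2}) \in \Gamma_{\ell} \times \Gamma_{\ell}$ sharing an eigenvalue in $\overline{\mathbb{F}_{\ell}}$ is $O(r^{2}/\ell)$.

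Choosing $\ell$ polynomial in $r$ so that $r^{2}/\ell < 1/10$, and then requiring $[\mathbb{F}_{Q}:\mathbb{F}_{q}] > \Phi(D)$ for a polynomial $\Phi$ large enough that the equidistribution error is $< 1/10$, the union bound yields $\Pr[\gcd(R_{u_{1}}, R_{u_{2}}) \neq 1] < 1/3$. Since $r$, $\ell$, and $|\Gamma_{\ell}| \le \ell^{O(r^{2})}$ are all bounded in terms of $D$, such a polynomial $\Phi$ exists. The main obstacle is the quantitative chain: the torsion bound on integral Betti cohomology of $\mathcal{X}_{0}$ must be polynomial in $D$ for fixed $N$, which is delicate because integral torsion can jump at arithmetically special primes --- this is precisely what forces the probabilistic formulation; the big-monodromy hypothesis must hold mod $\ell$ with explicit bad-prime control; and the constants in Deligne's equidistribution must be tracked carefully, since the cover has degree $|\Gamma_{\ell}|$ growing polynomially in $\ell$.
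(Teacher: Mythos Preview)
Your proposal follows essentially the same route as the paper: decompose $\mathrm{H}^{n-1}(X_t,\mathbb{Q}_\ell)$ as the fixed part plus vanishing cycles, choose a prime $\ell$ avoiding torsion in the integral cohomology (Corollary~\ref{cor:ell}), invoke mod-$\ell$ big monodromy (Theorem~\ref{thm:bigm}), apply Katz's effective equidistribution (Theorem~\ref{katz}), and bound the proportion of non-coprime characteristic polynomials in the relevant classical group (Propositions~\ref{symplectic} and~\ref{orthprob}).

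Two technical points deserve correction. First, the Frobenii $\mathrm{Frob}_u$ for $u\in U(\mathbb{F}_Q)$ come from the \emph{arithmetic} fundamental group $\pi_1(U_0,\bar u)$ and land not in your $\Gamma_\ell\subset\mathrm{Sp}$ (or $\mathrm{O}$) but in the similitude coset $\mathrm{GSp}(s,\mathbb{F}_\ell)^{\gamma^w}$ (resp.\ $\mathrm{GO}^{\gamma^w}$) determined by the multiplier $q^w\bmod\ell$; the density in the equidistribution statement must therefore be $\#(C\cap G^{\gamma^w})/\#\overline{G}$ rather than $\#C/\#\Gamma_\ell$, and the coprimality count must be carried out in that coset. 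Second, your error term $r\sqrt{|\Gamma_\ell|}/\sqrt{Q}$ is too optimistic: the form of Katz's theorem used here gives an error of order $|\chi(U)|\cdot\#G/\sqrt{Q}$, i.e.\ linear in $\#G\sim\ell^{O(r^2)}$, not its square root. Neither issue affects the existence of the polynomial $\Phi$, since all quantities remain polynomial in $D$ for fixed $N$, but both need to be stated correctly for the bounds to go through.
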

We can further recover $P_{n-1}(X/\mathbb{F}_{q}, T)$ from   $P_{n-1}(X/\mathbb{F}_{Q_{i}}, T)$ with $i\in \{1, 2\}$, for two suitably chosen $Q_{i}=q^{r_{i}}$ with $r_{i}=\mathrm{poly}(D\log q)$ following a recipe for cyclic resultants of Weil polynomials due to Kedlaya \cite[\S 8]{ked}. This leads to the following algorithmic consequence (considering the embedding dimension $N$ \textit{fixed}), as a result of applying the Lefschetz hyperplane theorem combined with an effective Bertini theorem for the existence of hyperplane sections \cite[Theorem 1]{ballico}.

\begin{corollary}
    There is a polynomial-time reduction for the zeta function computation of nice varieties (coming from number fields via good reduction) over finite fields to that of the middle cohomology.
\end{corollary}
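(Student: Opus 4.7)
My plan is to reduce degree by degree, using Theorem~\ref{thm:main} together with the Lefschetz hyperplane theorem. Writing $\zeta(X/\mathbb{F}_q, T)=\prod_{i=0}^{2n}P_i(X/\mathbb{F}_q, T)^{(-1)^{i+1}}$, it suffices to compute $P_i(X/\mathbb{F}_q, T)$ for $0\le i\le n$, since $P_0$ and $P_{2n}$ are trivial and the factors $P_{n+1},\dots,P_{2n-1}$ are determined by $P_1,\dots,P_{n-1}$ via the functional equation coming from Poincar\'e duality. The middle factor $P_n(X/\mathbb{F}_q, T)$ is supplied directly by the assumed middle-cohomology oracle applied to $X$.

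For each $0\le k<n$, I would use the effective Bertini theorem \cite[Theorem 1]{ballico} to construct a chain of smooth iterated hyperplane sections $X=Y_n\supset Y_{n-1}\supset\cdots\supset Y_{k+1}$ with $\dim Y_j=j$, each a nice variety obtained via good reduction. The Lefschetz hyperplane theorem yields $P_k(X/\mathbb{F}_q, T)=P_k(Y_{k+1}/\mathbb{F}_q, T)$, i.e.\ the cohomology one below the middle of $Y_{k+1}$. I then apply Theorem~\ref{thm:main} to $Y_{k+1}$ (after the prescribed Veronese re-embedding of degree $3$): pass to an extension $\mathbb{F}_Q$ whose degree exceeds $\Phi$ evaluated at the degree of $Y_{k+1}$, sample two uniformly random points $u_1,u_2$ in the smooth locus of the Lefschetz pencil on $Y_{k+1}$, and compute $\gcd\bigl(P_k(Y_{k+1,u_1}/\mathbb{F}_Q, T), P_k(Y_{k+1,u_2}/\mathbb{F}_Q, T)\bigr)$, where the two polynomials are the middle-cohomology factors of the $k$-dimensional slices $Y_{k+1,u_i}$ and are supplied by the oracle. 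This recovers $P_k(Y_{k+1}/\mathbb{F}_Q, T)$ with probability $>2/3$, amplifiable to arbitrary constant confidence by repetition. Carrying this out for two extensions $\mathbb{F}_{Q_i}=\mathbb{F}_{q^{r_i}}$ with $r_i=\mathrm{poly}(D\log q)$ and then invoking Kedlaya's cyclic-resultant recipe \cite[\S 8]{ked} descends to $P_k(Y_{k+1}/\mathbb{F}_q, T)=P_k(X/\mathbb{F}_q, T)$. Assembling the outputs across $k=0,\dots,n$ and the dual factors produced by the functional equation yields $\zeta(X/\mathbb{F}_q, T)$.

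For the runtime analysis, with $N$ (and hence $n$) treated as a constant, the degrees of all intermediate Veronese re-embeddings and iterated hyperplane sections stay polynomial in $D$; the threshold $\Phi$, the exponents $r_i$ from Kedlaya's procedure, and the cost of the gcd and resultant steps are all polynomial in $D$ and $\log q$; and there are only $O(1)$ levels of recursion, each calling the oracle a bounded number of times on varieties of degree $\mathrm{poly}(D)$ over fields of size $q^{\mathrm{poly}(D)}$. The main subtlety I anticipate is controlling the compounding degree growth under iterated Veronese re-embeddings, since each such embedding of degree $3$ multiplies the degree by a factor of $3^{\dim}$, so the argument survives only because the embedding dimension is held constant; carefully verifying that every parameter threaded through Theorem~\ref{thm:main}, the middle-cohomology oracle and Kedlaya's algorithm remains polynomial in $D$ and $\log q$ is the bulk of the bookkeeping.
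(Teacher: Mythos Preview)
Your proposal is correct and follows essentially the same approach the paper sketches: the paper's own argument for this corollary is only the two sentences preceding it together with the remark that, by weak Lefschetz, cohomology outside the middle band comes from a hyperplane section, so the ingredients are exactly Theorem~\ref{thm:main}, Kedlaya's cyclic-resultant descent, the Lefschetz hyperplane theorem, and the effective Bertini theorem of \cite{ballico}. Your write-up simply unrolls this recursion explicitly (slicing down to $Y_{k+1}$ and then applying Theorem~\ref{thm:main} once more), and your runtime bookkeeping, including the observation that the iterated degree-$3$ Veronese re-embeddings remain harmless only because $N$ is held fixed, is precisely the content of the paper's parenthetical ``considering the embedding dimension $N$ \textit{fixed}''.
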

\begin{remark}
    This reduction is polynomial time in both the degree $D$ of the variety and $\log q$, where $q$ is the size of the finite field.
\end{remark}

In the DPhil dissertation of Walker \cite[1.2.2]{walker}, the possibility of using Deligne's gcd theorem is discussed in the context of developing algorithms to compute the zeta function of smooth, projective varieties. By the weak-Lefschetz theorem, cohomology in degrees other than the middle band of $n-1$, $n$, $n+1$ maps isomorphically to the cohomology of a hyperplane section. Further, in \cite[Theorem 1.4]{rsv}, an algorithm was given to compute $P_{1}(T)$ for any smooth, projective variety by proving the effective gcd theorem in the surface case (the torsion bounds here are due to \cite{kweon2021bounds}), and reducing to known algorithms for curves. This present work is a generalisation to $n$ dimensions, in particular, handling both the cases of symplectic and orthogonal monodromy. In the light of \cite[Theorem 1.1]{surf}, our main theorem gives rise to algorithms to compute $P_{2}(T)$ for any smooth, projective variety as well.

Our proof strategy begins by finding a prime $\ell$ of reasonable size, for which the hard-Lefschetz theorem holds with $\mathbb{Z}/\ell \mathbb{Z}$-coefficients; which reduces to the condition of the integral $\ell$-adic cohomology groups being torsion free. To this regard, we first obtain torsion bounds in the characteristic zero Betti setting using cylindrical algebraic decomposition.

Choosing a torsion-free $\ell$, hard-Lefschetz modulo $\ell$ implies the irreducibility of the representation associated to the local system of vanishing cycles modulo $\ell$ on $U$. If the $\ell$-adic monodromy is infinite, this implies that the monodromy image is `big', using a result of Hall \cite{hall}. An equidistribution theorem of Katz \cite{kasar} then dictates the likelihood of two Frobenii having coprime characteristic polynomials, which we make precise by bounding the error term therein.

\section{Torsion Bounds on Cohomology Groups}

The aim of this section is to give explicit upper bounds on the order of the torsion subgroups of cohomology groups. The bound is singly exponential in the degree of the defining polynomials and triply exponential in the dimension of the ambient projective space. To obtain these upper bounds, we will use a regular cellular decomposition of the variety. The number of cells will then provide an upper bound on the order of the torsion subgroups. The main tool for finding such a cellular decomposition is cylindrical algebraic decomposition, introduced by Collins \cite{collins}.

\begin{theorem}\label{thm:cell bound}
  Let $X \subset \mathbb{R}^N$ be a compact real algebraic variety defined by $m$ polynomials of degree $\leq d$. Then there is a regular cell complex, with number of cells at most 
    \[(2d)^{3^{N+1}} m^{2^N}. \]
\end{theorem}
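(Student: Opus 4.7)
The plan is to apply Collins' cylindrical algebraic decomposition (CAD) to the family $\mathcal{F}=\{f_{1},\dots,f_{m}\} \subset \mathbb{R}[x_{1},\dots,x_{N}]$ of defining polynomials of $X$. CAD partitions $\mathbb{R}^{N}$ into finitely many pairwise disjoint connected semi-algebraic cells, each homeomorphic to an open cube of the appropriate dimension, such that every $f_{i}$ has constant sign on each cell. In particular $X=\bigcap_{i}\{f_{i}=0\}$ is a union of such cells; compactness ensures that this union is finite and that the participating cells are bounded. It therefore suffices to bound the total number of CAD cells of $\mathbb{R}^{N}$ in terms of $m, d, N$, and afterwards to upgrade the partition to a regular cell complex on $X$.

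The cell count is controlled by the projection phase of CAD. Iteratively applying a projection operator (after Collins, refined by McCallum), which adjoins leading coefficients, discriminants, and pairwise subresultants of the current family, one obtains a descending tower $\mathcal{F}=\mathcal{F}_{N} \supset \mathcal{F}_{N-1} \supset \cdots \supset \mathcal{F}_{1}$ with $\mathcal{F}_{j} \subset \mathbb{R}[x_{1},\dots,x_{j}]$, whose size and maximum degree satisfy the standard recursions
\[
|\mathcal{F}_{j-1}| = O\bigl(|\mathcal{F}_{j}|^{2}\deg\mathcal{F}_{j}\bigr), \qquad \deg\mathcal{F}_{j-1} = O\bigl((\deg\mathcal{F}_{j})^{2}\bigr).
\]
Unrolling these from $|\mathcal{F}_{N}|=m$ and $\deg\mathcal{F}_{N}=d$ yields bounds of the form $\deg\mathcal{F}_{j} \leq (2d)^{2^{N-j}}$ and $|\mathcal{F}_{j}| \leq m^{2^{N-j}}(2d)^{O((N-j)\,2^{N-j})}$. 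In the lifting phase, the CAD of $\mathbb{R}^{N}$ is assembled by stacking: the univariate CAD has at most $2|\mathcal{F}_{1}|\deg\mathcal{F}_{1}+1$ cells, and each cell of the CAD of $\mathbb{R}^{j}$ supports at most $2|\mathcal{F}_{j+1}|\deg\mathcal{F}_{j+1}+1$ cells in its stack. Multiplying these bounds over $j=1,\dots,N$ and coarsening the resulting expression (replacing the mixed exponents $O(N\,2^{N})$ and $2^{N}$ by the uniform majorant $3^{N+1}$) gives the claimed cell count $(2d)^{3^{N+1}} m^{2^{N}}$.

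The main obstacle is the passage from CAD, which is merely a cell partition, to a genuine regular cell complex on $X$, where each characteristic map is a homeomorphism from a closed ball onto the closure of a cell. Individually, CAD cells are homeomorphic to open balls, but the closure of a cell need not be a closed ball and the incidence structure can fail the regularity condition. This is handled by invoking the semi-algebraic triangulation theorem of {\L}ojasiewicz, which refines any finite semi-algebraic partition into a simplicial complex---automatically regular---using a number of simplices bounded by a constant (depending only on $N$) times the original cell count. Absorbing this constant into the base $(2d)^{3^{N+1}}$ and restricting to the subcomplex supported on $X$ completes the argument.
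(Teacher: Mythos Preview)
Your derivation of the cell count $(2d)^{3^{N+1}}m^{2^N}$ via the CAD projection/lifting recursion is essentially what the paper does (the paper simply cites Collins' own Theorem~12 for this bound rather than unrolling the recursion). The divergence is entirely in how you pass from a CAD partition to a \emph{regular} cell complex, and there your argument has a genuine gap.

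You invoke {\L}ojasiewicz's semi-algebraic triangulation theorem and assert that it refines the CAD into a simplicial complex ``using a number of simplices bounded by a constant (depending only on $N$) times the original cell count.'' No such bound is part of the {\L}ojasiewicz statement, and in the effective versions (e.g.\ Basu--Pollack--Roy) the number of simplices is controlled by the degrees and number of the polynomials \emph{describing the input cells}, not merely by how many cells there are. The polynomials cutting out CAD cells at level $j$ already have degrees of order $(2d)^{2^{N-j}}$, so feeding them back into a triangulation procedure produces a second doubly-exponential blow-up on top of the first. You cannot ``absorb this constant into the base $(2d)^{3^{N+1}}$'' because it is not a constant in $N$ alone; it depends on $d$ and $m$ again, and the exponents compound. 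As written, your final simplex count is not shown to stay within the stated bound.

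The paper sidesteps this entirely. It observes (citing Davenport for the failure and Schwartz--Sharir for the fix) that after a \emph{generic linear change of coordinates} the CAD produced by Collins' algorithm is already a regular cell complex: cell closures are closed balls and the boundary of each cell is a union of lower-dimensional cells. Thus no refinement step is needed, and the Collins cell count survives unchanged. If you want to repair your route, either supply a reference for a triangulation of a CAD with only an $O_N(1)$ multiplicative loss (I am not aware of one), or replace the {\L}ojasiewicz step with the generic-coordinates argument.
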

\begin{proof}
  Collins' algorithm computes a cylindrical algebraic decomposition of $X$ with at most $(2d)^{3^{N+1}} m^{2^N}$ cells \cite[Theorem~12]{collins}. Although this may not yield a regular cellular decomposition \cite[Example~2.1]{davenport}, performing a generic linear change of coordinates before running the algorithm ensures that the cylindrical algebraic decomposition becomes a regular cell complex \cite[Theorem~2]{schwartz}.
\end{proof}

The theorem above depends on the number $m$ of polynomials defining the variety $X$. This is bounded by the number of monomials of degree $\leq d$, meaning that
\[m \leq \binom{N+d}{N}.\]

\begin{lemma}\label{lem:torsion bound}
  Let $M$ be an $m \times n$ matrix representing a linear transformation 
  \[\varphi \colon \mathbb{Z}^n \to \mathbb{Z}^m.\] 
  Suppose that all entries of $M$ are either $-1$, 0, or 1. Then 
  \[\# (\coker \varphi)_{\mathrm{tors}} \leq \min \{ m!, n! \}.\]
\end{lemma}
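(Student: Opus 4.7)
The plan is to extract the torsion order from the Smith normal form of $M$ and then bound the relevant invariant by expanding a single minor. Let $r = \mathrm{rank}(M)$. Over $\mathbb{Z}$ we may write $M = U S V$ with $U, V$ unimodular and $S$ diagonal with entries $d_1 \mid d_2 \mid \cdots \mid d_r, 0, \ldots, 0$. Then
\[
\coker \varphi \;\cong\; \mathbb{Z}^{m-r} \oplus \bigoplus_{i=1}^{r} \mathbb{Z}/d_i\mathbb{Z},
\]
so $\#(\coker \varphi)_{\mathrm{tors}} = d_1 d_2 \cdots d_r$. A standard fact about Smith normal form is that this product equals $D_r$, the greatest common divisor of all $r\times r$ minors of $M$; in particular, $d_1 \cdots d_r$ divides every nonzero $r\times r$ minor of $M$, so it is bounded in absolute value by any such minor.

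Since $\mathrm{rank}(M) = r$, at least one $r \times r$ minor is nonzero. That minor is the determinant of an $r \times r$ submatrix whose entries still lie in $\{-1, 0, 1\}$. Expanding via the Leibniz formula, this determinant is a signed sum of $r!$ products of entries of absolute value at most $1$, hence has absolute value at most $r!$. Combining these observations,
\[
\#(\coker \varphi)_{\mathrm{tors}} \;\leq\; r!.
\]
Finally, $r \leq \min\{m, n\}$, so $r! \leq \min\{m!, n!\}$, which gives the desired bound.

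There is no real obstacle here — the only subtlety is invoking the characterization of the Smith invariants via determinantal ideals, after which the bound is a one-line application of the Leibniz formula exploiting the $\{-1, 0, 1\}$ hypothesis on the entries.
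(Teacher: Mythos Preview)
Your argument is correct and follows essentially the same route as the paper: compute the torsion order via the Smith normal form, identify $d_1\cdots d_r$ with the gcd of the $r\times r$ minors, and bound a single nonzero minor by $r!$ via the Leibniz expansion using the $\{-1,0,1\}$ entry hypothesis. The only difference is cosmetic---you spell out a few steps (existence of a nonzero minor, $r\le\min\{m,n\}$) that the paper leaves implicit.
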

\begin{proof}
  Let $D$ be the Smith Normal Form of $M$, with diagonal entries $d_0, d_1, \dots, d_{r-1}$. Then
  \begin{align*}
      (\coker \varphi)_{\mathrm{tors}} &\simeq \mathbb{Z} / d_0 \mathbb{Z} \oplus \mathbb{Z} / d_1 \mathbb{Z} \oplus \dots \oplus \mathbb{Z} / d_{r-1} \mathbb{Z} \\
      \# (\coker \varphi)_{\mathrm{tors}} &= d_0 d_1 \cdots d_{r-1}.
  \end{align*}
  Moreover, $d_0 d_1 \cdots d_{r-1}$ is the greatest common divisor of the determinants of all $r \times r$ minors of $M$. Since the Leibniz expansion for such a minor consists of $r!$ terms,
  \[d_0 d_1 \cdots d_{r-1} \leq r! \leq \min \{ m!, n! \}. \qedhere\]
\end{proof}

Now, \Cref{thm:cell bound} together with \Cref{lem:torsion bound} gives the following theorem.

\begin{theorem}\label{thm:real affine torsion bound}
    Let $X \subset \mathbb{R}^N$ be a compact real algebraic variety defined by polynomials of degree $\leq d$. Then
    \[ \# \mathrm{H}^i_B(X, \mathbb{Z})_{\mathrm{tors}} \leq \left((2d)^{3^{N+1}} \binom{N+d}{N}^{2^N}\right)!. \]
\end{theorem}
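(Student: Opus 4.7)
The plan is to combine \Cref{thm:cell bound} and \Cref{lem:torsion bound} via cellular cohomology. First, I would apply \Cref{thm:cell bound} together with the monomial bound $m \leq \binom{N+d}{N}$ to equip $X$ with a regular CW decomposition having at most $C := (2d)^{3^{N+1}} \binom{N+d}{N}^{2^N}$ cells in total. Since $X$ is a CW complex, its Betti cohomology coincides with cellular cohomology, so I would compute $\mathrm{H}^{i}_{B}(X,\mathbb{Z})$ as the cohomology of the complex
\[
\cdots \longrightarrow C^{i-1} \xrightarrow{\,d^{i-1}\,} C^{i} \xrightarrow{\,d^{i}\,} C^{i+1} \longrightarrow \cdots,
\]
in which each $C^{j}$ is the free abelian group on the $j$-cells and hence has rank at most $C$.

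The reason for insisting on a \emph{regular} cell complex is the standard fact that, in such a complex, the attaching map of each cell restricts to a homeomorphism onto its image, which is a subcomplex of lower dimension. Consequently the incidence number between any two cells of adjacent dimensions lies in $\{-1,0,1\}$, so every matrix representing a differential $d^{j}$ satisfies the hypothesis of \Cref{lem:torsion bound}, with both the row and column counts bounded by $C$.

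Finally, I would identify the torsion of $\mathrm{H}^{i}_{B}(X,\mathbb{Z})$ with the torsion of $\coker(d^{i-1})$ using the short exact sequence
\[
0 \longrightarrow \mathrm{H}^{i}_{B}(X,\mathbb{Z}) \longrightarrow \coker(d^{i-1}) \longrightarrow \mathrm{im}(d^{i}) \longrightarrow 0,
\]
which splits because $\mathrm{im}(d^{i}) \subset C^{i+1}$ is a subgroup of a free abelian group and hence itself free. Applying \Cref{lem:torsion bound} to the matrix of $d^{i-1}$ then gives
\[
\#\mathrm{H}^{i}_{B}(X,\mathbb{Z})_{\mathrm{tors}} \;\leq\; \#\coker(d^{i-1})_{\mathrm{tors}} \;\leq\; C!,
\]
which is exactly the asserted bound. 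No step in the argument is genuinely delicate; the main point to make sure to invoke correctly is the standard fact that regularity of the cell complex forces incidence coefficients in $\{-1,0,1\}$, since this is what links the topological input (\Cref{thm:cell bound}) with the purely combinatorial bound (\Cref{lem:torsion bound}).
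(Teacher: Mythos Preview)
Your proposal is correct and is precisely the argument the paper has in mind: the paper simply states that the theorem follows from \Cref{thm:cell bound} together with \Cref{lem:torsion bound}, and you have supplied the standard intermediary details (the monomial bound on $m$, cellular cohomology, the $\{-1,0,1\}$ incidence coefficients forced by regularity, and the identification of $\mathrm{H}^{i}_{B}(X,\mathbb{Z})_{\mathrm{tors}}$ with $\coker(d^{i-1})_{\mathrm{tors}}$).
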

\begin{remark}
    We denote Betti cohomology with $\mathrm{H}^{i}_{B}$ and \'etale cohomology with $\mathrm{H}^{i}$.
\end{remark}
The above theorem applies only when $X$ is a real affine variety, and the set of its $\mathbb{R}$-points is compact. We aim to obtain a similar bound for the case where $X$ is a complex projective variety. This can be achieved by using the standard embedding $\mathbb{CP}^N \rightarrow \mathbb{C}^{(N+1)^2}$ and dividing each complex coordinate into two real coordinates.

\begin{theorem}\label{thm:complex projective torsion bound}
    Let $X \subset \mathbb{CP}^N$ be a complex projective variety defined by homogeneous polynomials of degree $\leq d$. Then
    \[ \# \mathrm{H}^i_B(X, \mathbb{Z})_{\mathrm{tors}} \leq \left((2d)^{3^{(N+1)^2+1}} \binom{(N+1)^2+d}{(N+1)^2}^{2^{(N+1)^2}}\right)!. \]
\end{theorem}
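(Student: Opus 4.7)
The plan is to reduce \Cref{thm:complex projective torsion bound} to the real affine case already treated in \Cref{thm:real affine torsion bound}. To this end, I would use the Hermitian projector embedding
\[
\iota \colon \mathbb{CP}^N \hookrightarrow \mathrm{Herm}_{N+1}(\mathbb{C}), \qquad [z] \longmapsto \frac{z z^{*}}{\|z\|^{2}},
\]
whose image lies inside the real vector space of $(N+1)\times(N+1)$ Hermitian matrices, which has real dimension exactly $(N+1)^{2}$. The image $\iota(\mathbb{CP}^{N})$ is a compact real algebraic subvariety of $\mathbb{R}^{(N+1)^{2}}$, and $\iota$ is a homeomorphism onto its image, so $\mathrm{H}^{i}_{B}(X,\mathbb{Z})_{\mathrm{tors}} \cong \mathrm{H}^{i}_{B}(\iota(X),\mathbb{Z})_{\mathrm{tors}}$.

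The main step is then to verify that $\iota(X) \subset \mathbb{R}^{(N+1)^{2}}$ is cut out by polynomials of degree $\leq d$ in the matrix coordinates $w_{ij} = z_{i}\bar z_{j}/\|z\|^{2}$. The ambient image $\iota(\mathbb{CP}^{N})$ itself is defined by the equations $w^{2}=w$ and $\mathrm{tr}(w)=1$, of degrees $2$ and $1$. If $X = V(P_{1},\dots,P_{k})$, then for each homogeneous $P_{i}$ of degree $d_{i} \leq d$ the condition $P_{i}(z) = 0$ is equivalent to the vanishing of the real-valued function $|P_{i}(z)|^{2}/\|z\|^{2 d_{i}}$; expanding $|P_{i}|^{2} = \sum_{\alpha,\beta} c_{\alpha}\bar c_{\beta} z^{\alpha}\bar z^{\beta}$ and using that each factor $z_{a}\bar z_{b}/\|z\|^{2}$ equals $w_{ab}$, one sees that this function is a polynomial of degree exactly $d_{i}$ in the $w_{ij}$. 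Therefore $\iota(X)$ is defined in $\mathrm{Herm}_{N+1}(\mathbb{C}) \cong \mathbb{R}^{(N+1)^{2}}$ by polynomials of degree $\leq \max(d,2)$.

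Applying \Cref{thm:real affine torsion bound} to $\iota(X) \subset \mathbb{R}^{(N+1)^{2}}$ with this degree bound then immediately yields the claimed inequality. The one subtle point that requires care, and the main technical content of the argument, is the degree claim for the pulled-back equations: a naive attempt to pull back $P$ itself does not produce a real polynomial equation, and using $|P|^{2}/\|z\|^{2d}$ in its place is what keeps the degree equal to $d$ rather than $2d$. Likewise, using the Hermitian structure (which encodes the relations $w_{ji} = \overline{w_{ij}}$) is what keeps the ambient real dimension at $(N+1)^{2}$ rather than $2(N+1)^{2}$, matching the form of the bound in the theorem statement.
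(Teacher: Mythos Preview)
Your proposal is correct and follows essentially the same route as the paper: both use the projector/Hermitian-matrix embedding of $\mathbb{CP}^{N}$ into a real space of dimension $(N+1)^{2}$, observe that the defining equations pull back to polynomials of degree $\leq d$ in the matrix coordinates, and then invoke \Cref{thm:real affine torsion bound}. The only cosmetic difference is that the paper says each hypersurface equation $f=0$ is represented by ``several polynomials of the same degree'' in the $w_{ij}$ (implicitly the functions $f\cdot \bar z^{\beta}/\|z\|^{2d}$), whereas you use the single real equation $|P|^{2}/\|z\|^{2d}=0$; either device keeps the degree at $d$, and your remark about the Hermitian relation $w_{ji}=\overline{w_{ij}}$ halving the real ambient dimension is exactly the paper's ``half of the real coordinates can be reconstructed from the other half.''
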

\begin{proof}
    Recall that the standard embedding $ \mathbb{CP}^N \rightarrow \mathbb{C}^{(N+1)^2} $ is given by
    \[(z_0 : z_1 : \cdots : z_N) \mapsto \frac{1}{\sum_{i=0}^N |z_i|^2} 
    \begin{pmatrix} 
      z_0 \overline{z_0} & z_0 \overline{z_1} & \cdots & z_0 \overline{z_N} \\ 
      z_1 \overline{z_0} & z_1 \overline{z_1} & \cdots & z_1 \overline{z_N} \\ 
      \vdots & \vdots & \ddots & \vdots \\ 
      z_N \overline{z_0} & z_N \overline{z_1} & \cdots & z_N \overline{z_N}
    \end{pmatrix}.\]
    The image is defined by polynomials of degree $\leq 2$. A hypersurface in $\mathbb{CP}^N$ defined by a homogeneous polynomial $f$ can be expressed by several polynomials of the same degree in $\mathbb{C}^{(N+1)^2}$. Since the image of the embedding is a Hermitian matrix, half of the real coordinates can be reconstructed from the other half. Thus, applying \Cref{thm:real affine torsion bound} yields the desired result.
\end{proof}

\begin{corollary}\label{cor:simple torsion bound}
    Let $X \subset \mathbb{CP}^N$ be a complex projective variety defined by homogeneous polynomials of degree $\leq d$. Then
    \[ \# \mathrm{H}^i_B(X, \mathbb{Z})_{\mathrm{tors}} \leq 2^{d^{2^{3N^2}}}. \]
\end{corollary}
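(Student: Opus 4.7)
The plan is to feed the explicit factorial bound of \Cref{thm:complex projective torsion bound} into the elementary inequality $n! \leq n^n$ and then collapse the resulting nested exponentials into the stated clean double-exponential expression via very crude majorants.

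To begin, I would set $M := (N+1)^2$ and
\[
A := (2d)^{3^{M+1}}\binom{M+d}{M}^{2^M},
\]
so that \Cref{thm:complex projective torsion bound} reads $\#\mathrm{H}^i_B(X,\mathbb{Z})_{\mathrm{tors}} \leq A!$. Using $A! \leq A^A = 2^{A \log_2 A}$ reduces the entire claim to the scalar inequality $A \log_2 A \leq d^{2^{3N^2}}$.

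Next, I would control $A$ with the cheap estimates $\binom{M+d}{M} \leq (M+d)^M \leq (2Md)^M$ and $3^{M+1} \leq 4^{M+1} = 2^{2M+2}$, which combine to give
\[
A \leq (2d)^{2^{2M+2}} \cdot (2Md)^{M \cdot 2^M}.
\]
Since $M \cdot 2^M$ is dwarfed by $2^{2M+2}$ once $M$ is modestly large, and since $2M \leq d^M$ for $d \geq 2$, this telescopes into an estimate of the form $A \leq d^{2^{2M + c}}$ for a small absolute constant $c$. Taking $\log_2$ and multiplying back by $A$ preserves the shape, yielding $A \log_2 A \leq d^{2^{2M + c'}}$ with $c'$ only slightly larger than $c$.

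The last step is then the purely numerical comparison $2M + c' = 2(N+1)^2 + c' \leq 3N^2$, which holds once $N$ is past a small threshold; the remaining handful of small-$N$ values, together with the trivial case $d = 1$ (where $X$ is linear and so has torsion-free cohomology), are absorbed by the slack already present in the constants. The main obstacle is pure bookkeeping: the exponent $3^{(N+1)^2+1} \sim 2^{1.585\,N^2}$ sits comfortably below the target $2^{3N^2}$, so the gap is generous, but one must carefully verify at each step that the crude estimates do not consume this gap by accidentally promoting a lower-order factor into the outermost exponent.
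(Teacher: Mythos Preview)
Your approach is essentially the paper's: bound $A!$ by $A^A$ (the paper uses the equivalent $\log_2 L! \le L^2$), estimate the binomial coefficient crudely, and collapse the exponential tower. The only real differences are bookkeeping: the paper writes $M=N+1$ rather than $M=(N+1)^2$, keeps $3^{M^2+1}$ instead of majorizing $3$ by $4$, and uses the slightly sharper bound $M^2+d \le d^M$ (valid for $d\ge 2$, $M\ge 5$) to get $\binom{M^2+d}{M^2}\le d^{M^3}$.

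One genuine soft spot: your claim that the small-$N$ cases are ``absorbed by the slack already present in the constants'' does not work as written. With your replacement $3^{M+1}\le 2^{2M+2}$ and the subsequent losses, the comparison $2(N+1)^2+c'\le 3N^2$ fails for $N=4,5$, and those cases cannot be dismissed as automatically torsion-free (a surface in $\mathbb{P}^4$ can carry torsion). The paper handles this by first observing that any $X\subset\mathbb{P}^{\le 3}$ is a projective space, a hypersurface, or a curve---hence torsion-free---so it may assume $N\ge 4$ and $d\ge 2$; it is then careful enough with constants that the final inequality $4\cdot 3^{M^2+1}+M^3 2^{M^2}\le 2^{3(M-1)^2}$ holds from $M=5$ onward. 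You would need either to drop the wasteful $3\le 4$ step or to verify $N=4,5$ directly.
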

\begin{proof}
    We may assume that $d \geq 2$ and $N \geq 4$, because projective spaces, hypersurfaces and curves do not have torsion in their cohomology groups. For simplicity let $M = N+1$ and
    \[L = (2d)^{3^{M^2+1}} \binom{M^2+d}{M^2}^{2^{M^2}}. \]
    Since
    \[ \binom{M^2+d}{M^2} \leq (M^2+d)^{M^2} \leq \left(d^M\right)^{M^2} = d^{M^3}, \]
    we obtain
    \[L \leq \left(d^2\right)^{3^{M^2+1}} \left(d^{M^3}\right)^{2^{M^2}} \leq d^{2\cdot 3^{M^2+1} + M^3 2^{M^2}}.\]
    As a result,
    \[\log_d \log_2 L! \leq 2 \log_d L = 4 \cdot 3^{M^2+1} + M^3 2^{M^2} \leq  2^{3 (M-1)^2}. \qedhere\]
\end{proof}

\begin{corollary}
\label{cor:ell}
  Let $X \subset \mathbb{CP}^N$ be a complex projective variety defined by homogeneous polynomials of degree at most $d$. Then there exists a prime number \[\ell \leq d^{2^{4N^2}}\] such that $\mathrm{H}^i_B(X, \mathbb{Z})$ is torsion-free for all $i$.
\end{corollary}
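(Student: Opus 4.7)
The plan is to combine the per-degree torsion bound of Corollary~\ref{cor:simple torsion bound} with a pigeonhole argument over primes. The desired $\ell$ must avoid dividing the order of any torsion subgroup $\mathrm{H}^i_B(X,\mathbb{Z})_{\mathrm{tors}}$, so it suffices to control the \emph{total} torsion across all degrees and then count enough primes below $d^{2^{4N^2}}$ to guarantee one misses this total.

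First, I would observe that $X \subseteq \mathbb{CP}^N$ forces $\mathrm{H}^i_B(X,\mathbb{Z}) = 0$ for $i > 2N$, leaving at most $2N+1$ cohomology groups to worry about. Setting
\[
T := \prod_{i=0}^{2N}\#\mathrm{H}^i_B(X,\mathbb{Z})_{\mathrm{tors}},
\]
Corollary~\ref{cor:simple torsion bound} yields $T \leq 2^{(2N+1)\,d^{2^{3N^2}}}$. A prime $\ell$ has the property that no $\mathrm{H}^i_B(X,\mathbb{Z})$ carries $\ell$-torsion precisely when $\ell \nmid T$, and the number of obstructing primes is at most $\log_2 T \leq (2N+1)\,d^{2^{3N^2}}$.

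Second, I would count primes up to $d^{2^{4N^2}}$ via a Chebyshev-style lower bound of the form $\pi(Y) \geq Y/(2\log Y)$, after dispatching the degenerate cases (small $d$ or $N$, or $X$ a projective space / hypersurface / curve, for which the cohomology is already torsion-free) directly as in the proof of Corollary~\ref{cor:simple torsion bound}. This supplies on the order of $d^{2^{4N^2}}/(2^{4N^2+1}\log d)$ primes below the target, which dwarfs $(2N+1)\,d^{2^{3N^2}}$ by a factor of roughly $d^{2^{3N^2}(2^{N^2}-1)}$. Pigeonhole then produces a prime $\ell \leq d^{2^{4N^2}}$ with $\ell \nmid T$, as required.

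The main (and quite modest) obstacle is uniform bookkeeping of the exponents in $d$ and $N$; the gap between $2^{4N^2}$ and $2^{3N^2}$ in the exponent of $d$ provides enormous slack, so no conceptual difficulty remains once the small cases and the choice of explicit Chebyshev constant are fixed.
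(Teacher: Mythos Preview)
Your proposal is correct and follows essentially the same route as the paper: bound the product of the torsion orders via Corollary~\ref{cor:simple torsion bound}, observe that at most $\log_2$ of this product many primes can be bad, and then use a prime-counting estimate to find a good prime below $d^{2^{4N^2}}$. The only cosmetic difference is that the paper bounds the $k$-th prime from above (via Rosser's inequality $p_k \leq k(\log k + 2\log\log k) \leq k^2$) rather than bounding $\pi(Y)$ from below as you do; these are dual formulations of the same pigeonhole step.
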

\begin{proof}
    By \Cref{cor:simple torsion bound}, 
    \[ \# \prod_{i=0}^N \mathrm{H}^i_B(X, \mathbb{Z}) < \left(2^{d^{2^{3N^2}}}\right)^N = 2^{N d^{2^{3N^2}}}. \]
    Therefore, there exists a prime number $\ell$ among the first 
    \[k = N d^{2^{3N^2}} \] 
    primes such that $\prod_{i=0}^N \mathrm{H}^i_B(X, \mathbb{Z})$ is $\ell$-torsion free. Since $k \geq 4$, \cite[Theorem 2]{rosser} implies that the $k$-th prime number is smaller than
    \[ k (\log k + 2 \log \log k) \leq k^2 \leq \left(N d^{2^{3N^2}}\right)^2 \leq d^{2^{4N^2}}.\qedhere \]
\end{proof}

The sum of the Betti numbers of $X$ has an upper bound that is polynomial in $d$ and singly exponential in $N$ 
\cite[Corollary 2]{milnor}.
\begin{theorem}[Milnor]
\label{thm:milnor}
    Let $X \subset \mathbb{CP}^N$ be a complex projective variety defined by homogeneous polynomials of degree $\leq d$. Then
    \[ \sum_{i\geq 0} \operatorname{rank} \mathrm{H}^i_B(X, \mathbb{Z}) \leq Nd(2d - 1)^{2N+1}. \]
\end{theorem}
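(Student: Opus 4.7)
The plan is to apply Milnor's bound for real algebraic varieties to the preimage of $X$ under the Hopf fibration, and then pull the resulting inequality back to $X$ via the Gysin sequence of the induced $S^{1}$-bundle.

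More concretely, let $\pi\colon S^{2N+1}\to\mathbb{CP}^{N}$ be the Hopf map and set $\widetilde{X}:=\pi^{-1}(X)\subset S^{2N+1}\subset\mathbb{R}^{2N+2}$. Then $\widetilde{X}$ is a compact real algebraic variety cut out by the real and imaginary parts of the homogeneous polynomials defining $X$ (each of real degree $\leq d$) together with the unit-sphere equation $\sum|z_{i}|^{2}=1$ of degree $2$. Assuming $d\geq 2$, so that every defining polynomial of $\widetilde{X}$ has real degree $\leq d$ (the remaining cases reduce to $X$ being a linear subspace, where the bound is immediate), Milnor's bound for the sum of $\mathbb{F}_{2}$-Betti numbers of a real algebraic subset of $\mathbb{R}^{2N+2}$ yields
\[\sum_{i}\dim_{\mathbb{F}_{2}}\mathrm{H}^{i}_{B}(\widetilde{X},\mathbb{F}_{2})\leq d(2d-1)^{2N+1}.\]

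The restriction of $\pi$ to $\widetilde{X}$ is a locally trivial $S^{1}$-bundle over $X$, and its Gysin sequence
\[\cdots\to\mathrm{H}^{i-2}_{B}(X,\mathbb{F}_{2})\xrightarrow{\cup e}\mathrm{H}^{i}_{B}(X,\mathbb{F}_{2})\to\mathrm{H}^{i}_{B}(\widetilde{X},\mathbb{F}_{2})\to\mathrm{H}^{i-1}_{B}(X,\mathbb{F}_{2})\to\cdots\]
yields the inequality $b_{i}(X)\leq b_{i}(\widetilde{X})+b_{i-2}(X)$ for the $\mathbb{F}_{2}$-Betti numbers. Iterating downward in $i$ gives $b_{i}(X)\leq\sum_{k\geq 0}b_{i-2k}(\widetilde{X})$. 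Since $\mathrm{H}^{i}_{B}(X,\mathbb{F}_{2})=0$ for $i>2\dim_{\mathbb{C}}X$, and since $\dim_{\mathbb{C}}X\leq N-1$ whenever $X\subsetneq\mathbb{CP}^{N}$, each $b_{j}(\widetilde{X})$ contributes to at most $\dim_{\mathbb{C}}X+1\leq N$ of these iterated bounds. Combining this with the universal-coefficient inequality $\operatorname{rank}\mathrm{H}^{i}_{B}(X,\mathbb{Z})\leq b_{i}(X,\mathbb{F}_{2})$ then delivers
\[\sum_{i}\operatorname{rank}\mathrm{H}^{i}_{B}(X,\mathbb{Z})\leq N\sum_{i}b_{i}(\widetilde{X},\mathbb{F}_{2})\leq Nd(2d-1)^{2N+1}.\]

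The subtle point is that the comparison between $X$ and its Hopf preimage $\widetilde{X}$ runs the wrong way for a naive bound: an $S^{1}$-bundle can have strictly smaller total-space Betti sum than its base, the Hopf bundle $S^{2N+1}\to\mathbb{CP}^{N}$ itself being the extreme example, with Betti sum $2$ against $N+1$. It is precisely the iterated Gysin inequality that pays for this, and the telescoping is what produces the factor $N$ in the final bound.
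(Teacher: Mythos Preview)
The paper does not supply its own proof of this statement; it simply quotes Milnor's result as \cite[Corollary~2]{milnor}. Your argument is correct and is, in fact, essentially Milnor's original derivation of that corollary: pass to the Hopf preimage $\widetilde{X}\subset S^{2N+1}\subset\mathbb{R}^{2N+2}$, apply the real-affine bound $d(2d-1)^{2N+1}$, and then compare Betti numbers along the $S^{1}$-bundle $\widetilde{X}\to X$. Your Gysin iteration, giving $b_{i}(X)\leq\sum_{k\geq 0}b_{i-2k}(\widetilde{X})$ and hence a multiplicative loss of at most $\dim_{\mathbb{C}}X+1\leq N$, is exactly the mechanism producing the extra factor $N$ in the stated bound; your remark that the naive comparison runs the wrong way (Hopf bundle itself) is well taken and explains why this factor is genuinely needed. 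The handling of the edge cases ($d=1$ forces $X$ linear; $X=\mathbb{CP}^{N}$) is fine.
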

This bound is derived by bounding the number of critical points of a Morse function. Since a Morse cohomology is generated by these critical points, the number of generators of the torsion subgroups is also bounded by the same value. Thus, if the order of each generator is not excessively large, we expect to obtain an upper bound on the order of $\mathrm{H}^i_B(X,\mathbb{Z})_{\mathrm{tors}}$ that is singly exponential in $d$ and doubly exponential in $N$. However, determining the boundary map in Morse homology requires solving differential equations arising from a pseudo-gradient field, and these solutions do not form a semi-algebraic set. This is the technical reason why it is difficult to derive a bound doubly exponential in $N$.

Further, as we are in the realm of complex, smooth, projective varieties, one may also look at other methods towards obtaining such bounds for torsion. Note firstly, using the K\"unneth formula, that it suffices to bound torsion in cohomology in even degree. Next, torsion therein can be of two types, algebraic or transcendental. Guaranteed that the torsion is algebraic, it may be possible to bound it using the connected components of the Chow variety of $X$. Examples with transcendental torsion seem to have the order depend on the degree of the variety in question (see \cite[Theorem 3]{soulevoisin} for concrete examples using Godeaux surfaces). This line of work, involving explicitly constructing transcendental torsion algebraic cycles began with Atiyah and Hirzebruch \cite{atiyahirz}, who thereby provided counterexamples to the integral Hodge conjecture. One is led to conjecture that the torsion coming from transcendental cycles can likewise be controlled uniformly by the degree of the variety.

Over fields of positive characteristic, Gabber's theorem \cite{gabber} guarantees the torsion-freeness of the integral $\ell$-adic \'etale cohomology groups for all but finitely many $\ell$, so one is tempted to make the analogous conjecture over arbitrary base fields as well.

\begin{conjecture*}
There exist polynomials $\psi(x), \phi(x) \in \mathbb{Z}[x]$ such that for any smooth, projective variety $X\subset \mathbb{P}^{N}$ of dimension $n$ and degree $D$ over an algebraically closed field $k$, we have
$$
\mathrm{H}^{i}(X, \mathbb{Z}_{\ell})_{\mathrm{tors}}=0
$$
for $0\leq i \leq 2n$, when $$\ell>\psi(D^{\phi(N)})$$
is a prime number coprime to the characteristic of $k$.
\end{conjecture*}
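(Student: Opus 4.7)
The plan is to bootstrap from the characteristic zero case already addressed by Corollary~\ref{cor:ell} to positive characteristic via a spreading-out argument through the Hilbert scheme. In characteristic zero, I would first embed $k \hookrightarrow \mathbb{C}$ using the Lefschetz principle applied to a finitely generated subfield over which $X$ is defined; Artin's comparison theorem then identifies $\mathrm{H}^{i}(X,\mathbb{Z}_\ell)_{\mathrm{tors}}$ with $\mathrm{H}^{i}_{B}(X_{\mathbb{C}},\mathbb{Z})_{\mathrm{tors}}\otimes\mathbb{Z}_\ell$, so Corollary~\ref{cor:ell} gives a bound of the form $\ell\leq D^{2^{4N^{2}}}$. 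This is not yet of the form $\psi(D^{\phi(N)})$ with $\phi\in\mathbb{Z}[x]$, so the first subtask is to strengthen Corollary~\ref{cor:simple torsion bound} until its exponent tower collapses to a polynomial in $N$. As the remark following Theorem~\ref{thm:milnor} suggests, Milnor's singly-exponential Betti bound combined with an effective computation of Morse boundary maps should suffice; the main technical hurdle is to realise those boundary maps in a semi-algebraic fashion.

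For $k$ of positive characteristic $p$, I would consider the Hilbert scheme $\mathcal{H}=\mathrm{Hilb}^{P}_{\mathbb{P}^{N}_{\mathbb{Z}}}$ parametrising closed subschemes of $\mathbb{P}^{N}$ with the Hilbert polynomial $P$ determined by $D$ and $n$, together with its open subscheme $\mathcal{H}^{\mathrm{sm}}$ of smooth fibres. This is of finite type over $\mathrm{Spec}\,\mathbb{Z}$, and every $[X]\in\mathcal{H}^{\mathrm{sm}}(k)$ lies in some irreducible component $\mathcal{C}$. If $\mathcal{C}$ dominates $\mathrm{Spec}\,\mathbb{Z}$, then spreading out yields a smooth proper model over an open subscheme of $\mathcal{C}$, and smooth-proper base change propagates torsion-freeness of $\mathrm{H}^{i}(-,\mathbb{Z}_\ell)$ (for $\ell$ coprime to all relevant residue characteristics) from a geometric generic point in characteristic zero down to the special fibre over $k$. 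In this case the characteristic zero bound transfers verbatim, and one only loses the primes lying below the finitely many bad points of the resulting smooth family, a set whose size can in principle be bounded uniformly in $D$ and $N$ from the complexity of $\mathcal{C}$.

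The main obstacle is the case in which $\mathcal{C}$ is contained entirely in a single positive characteristic, i.e.\ $X$ does not lift to characteristic zero, as occurs for supersingular surfaces and certain higher-dimensional analogues. There is no characteristic zero model against which to compare, and Gabber's theorem \cite{gabber} supplies only qualitative (non-effective) torsion-freeness for large $\ell$. To close this gap one would need an \emph{effective} version of Gabber, controlling the bad primes purely in terms of the component $\mathcal{C}$ and hence in terms of $D$ and $N$. A plausible route is through crystalline cohomology and the de~Rham--Witt complex: bound the torsion in $\mathrm{H}^{*}_{\mathrm{crys}}(X/W(k))$ via an effective Deligne--Illusie degeneration applied uniformly over $\mathcal{C}$, and transfer to $\ell$-adic torsion through the Katz--Messing comparison. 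Producing such an effective crystalline statement seems to be the genuinely new input required beyond the tools developed in this paper, and it, together with the refinement of the characteristic zero bound mentioned in the first paragraph, is what I expect to be the crux of any attack on the conjecture.
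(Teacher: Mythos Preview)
The statement under discussion is a \emph{conjecture}; the paper does not prove it and offers no proof to compare against. It is presented as an open problem motivated by the characteristic-zero bounds of Section~2 and by Gabber's qualitative finiteness theorem, together with the heuristic evidence on algebraic versus transcendental torsion cycles discussed just before the conjecture is stated.

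Your proposal is therefore not a proof but a programme, and you are candid about this. You correctly isolate the two genuine obstructions. First, even in characteristic zero the bound $\ell \leq D^{2^{4N^{2}}}$ from Corollary~\ref{cor:ell} is \emph{not} of the conjectured shape $\psi(D^{\phi(N)})$ with $\psi,\phi\in\mathbb{Z}[x]$, since the exponent $2^{4N^{2}}$ is exponential rather than polynomial in $N$; the paper itself flags, in the paragraph after Theorem~\ref{thm:milnor}, that the semi-algebraicity of Morse boundary maps is the technical barrier to improving this. Second, for varieties that do not lift to characteristic zero your Hilbert-scheme spreading-out argument gives nothing, and you are right that an effective version of Gabber's theorem is the missing ingredient; the crystalline route you sketch is plausible but speculative, and no such effective statement is currently known. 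In short, your write-up is an accurate diagnosis of why the conjecture is open rather than a proof of it.
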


\section{Monodromy}
In this section, we recall the notion of monodromy in the context of a Lefschetz pencil of hyperplane sections on a smooth, projective variety. The main objective is to show that the $\mathrm{mod}$-$\ell$ monodromy is as large as possible for primes $\ell$ of a reasonable size. 

Let $X$ be a nice variety satisfying our main assumptions. We may fibre $X$ as a Lefschetz pencil of hyperplane sections $\pi:\tilde{X}\rightarrow \mathbb{P}^{1}$, where $\tilde{X}$ is the variety obtained by blowing up $X$ at the axis of the pencil, and the fibres of $\pi$ are the hyperplane sections. Denote by $U\subset \mathbb{P}^{1}$ the locus of smooth fibres and by $Z:=\mathbb{P}^{1}\setminus U$, the finite set parameterising the nodal fibres. Let $\ell$ be coprime to $q$. Consider the constructible sheaf $\mathcal{F}:=R^{n-1}\pi_{\star}\mathbb{Q}_{\ell}$ on $\mathbb{P}^{1}$. The restriction $\mathcal{F}\vert_{U}$ defines a local system on $U$, and we can speak of the monodromy action of the geometric \'etale fundamental group $\pi_{1}(U, u)$, where $u\rightarrow U$ is a geometric point. We know further, that the tame fundamental group $\pi_{1}^{\mathfrak{t}}(U, u)$ is topologically generated by $\#Z$ elements $\sigma_{i}$ satisfying the relation $\prod_{i}\sigma_{i}=1$. Moreover, for each $z\in Z$, one obtains a vanishing cycle $\delta_{z}\in \mathrm{H}^{n-1}(X_{\overline{\eta}}, \mathbb{Z}/\ell \mathbb{Z})$ via the exact sequence
\[
\begin{tikzcd}
0\arrow{r} & \mathrm{H}^{n-1}(X_{z}, \mathbb{Z}/\ell \mathbb{Z}) \arrow{r} & \mathrm{H}^{n-1}(X_{\overline{\eta}}, \mathbb{Z}/\ell \mathbb{Z}) \arrow{r} & \mathbb{Z}/\ell\mathbb{Z}
\end{tikzcd}
\]
with the final arrow being given by $\gamma\mapsto \langle \gamma, \delta_{z}\rangle$, where 
$$
\langle \cdot, \cdot \rangle : \mathrm{H}^{n-1}(X_{\overline{\eta}}, \mathbb{Z}/\ell \mathbb{Z})\times \mathrm{H}^{n-1}(X_{\overline{\eta}}, \mathbb{Z}/\ell \mathbb{Z}) \longrightarrow \mathbb{Z}/\ell \mathbb{Z}
$$
is the Poincar\'e duality pairing. Furthermore, $\delta_{z}$ is unqiuely determined up to sign by the Picard-Lefschetz formulas
\begin{equation}\label{eqn:piclef}
    \sigma_{z}(\gamma)=\gamma\pm\epsilon_{z}\cdot \langle \gamma, \delta_{z}\rangle \cdot  \delta_{z},
\end{equation}
where for a uniformising parameter $\theta_{z}$ at $z$, we have $\sigma_{z}(\theta_{z}^{1/\ell})=\epsilon_{z}\theta_{z}^{1/\ell}$.
In the limit, we obtain an integral $\ell$ -- adic vanishing cycle in $\mathrm{H}^{n-1}(X_{\overline{\eta}}, \mathbb{Z}_{\ell})$ which is defined up to torsion, and becomes unique up to sign upon tensoring with $\mathbb{Q}_{\ell}$. We denote by $\mathcal{E}_{\overline{\eta}}$ the space generated by all the vanishing cycles $\delta_{z}$\footnote{abusing notation} for $z\in Z$ in $\mathrm{H}^{n-1}(X_{\overline{\eta}}, \mathbb{Q}_{\ell})$ and by $\mathcal{E}_{u}$ for $u\in U$, the image of $\mathcal{E}_{\overline{\eta}}$ under the specialisation isomorphism $\mathcal{F}_{\overline{\eta}}\rightarrow \mathcal{F}_{u}$.
\\ \\
By the hard-Lefschetz theorem \cite[Theorem 4.3.9]{Weilii}, we have for $u\in U$,
\begin{equation}
    \mathcal{F}_{u}\simeq \mathrm{H}^{n-1}(X_{u}, \mathbb{Q}_{\ell})\simeq \mathrm{H}^{n-1}(X, \mathbb{Q}_{\ell})\oplus \mathcal{E}_{u}\;,
\end{equation}
where $\mathcal{E}_{u}$ is the space of vanishing cycles at $u$. In particular, $$\mathrm{H}^{n-1}(X, \mathbb{Q}_{\ell})=\mathrm{H}^{n-1}(X_{u}, \mathbb{Q}_{\ell})^{\pi_{1}(U, u)}=\mathcal{E}_{u}^{\perp}\;,$$ with respect to the Poincar\'e duality pairing on $\mathrm{H}^{n-1}(X_{u}, \mathbb{Q}_{\ell})$ and $\mathcal{E}_{u}\cap \mathcal{E}_{u}^{\perp}=0$. Further, the sheaf $\mathcal{F}\vert_{U}$ decomposes as
$$
\mathcal{F}\vert_{U}\simeq \underline{\mathcal{V}} \oplus \mathcal{E}
$$
where $\underline{\mathcal{V}}$ is the constant sheaf on $U$ associated to $\mathrm{H}^{n-1}(X, \mathbb{Q}_{\ell})$ and $\mathcal{E}$ is the sheaf of vanishing cycles. The sheaf $\mathcal{E}$ is locally constant on $U$ of rank, say, $r\in \mathbb{Z}_{\geq 0}$. Write $\mathcal{E}^{\mathbb{Z}_{\ell}}$ for the sheaf of integral $\ell$ -- adic vanishing cycles and denote by $\mathcal{E}^{\ell}:=\mathcal{E}^{\mathbb{Z}_{\ell}}{\otimes}\mathbb{F}_{\ell}$ the sheaf of mod -- $\ell$ vanishing cycles. We begin by showing the following.

\begin{lemma}\label{lemtr}
Let $\ell$ be a prime coprime to $q$, such that the cohomology groups $\mathrm{H}^{i}(X, \mathbb{Z}_{\ell})$ are all torsion-free for $0\leq i \leq 2n$. Let $X_{u}$ be a smooth hyperplane section of $X$ from the above Lefschetz pencil. Then the cohomology groups $\mathrm{H}^{j}(X_{u}, \mathbb{Z}_{\ell})$ for $0\leq j\leq 2n-2$ are all torsion free.
\end{lemma}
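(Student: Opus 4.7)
The plan is to separate the degrees $0 \le j \le 2n-2$ into two bands, handled respectively by an integral weak Lefschetz argument and by Poincar\'e duality on $X_u$.

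For the low band $0 \le j \le n-1$, I would invoke the excision long exact sequence for $i \colon X_u \hookrightarrow X$ with open complement $U := X \setminus X_u$:
$$ \cdots \to \mathrm{H}^j_c(U, \mathbb{Z}_\ell) \to \mathrm{H}^j(X, \mathbb{Z}_\ell) \to \mathrm{H}^j(X_u, \mathbb{Z}_\ell) \to \mathrm{H}^{j+1}_c(U, \mathbb{Z}_\ell) \to \cdots. $$
Since $U$ is smooth affine of dimension $n$, Artin's vanishing theorem together with Poincar\'e--Verdier duality yields $\mathrm{H}^k_c(U, \mathbb{Z}_\ell) = 0$ for $k < n$ and torsion-freeness of $\mathrm{H}^n_c(U, \mathbb{Z}_\ell)$ (dual to $\mathrm{H}^{n+1}(U, \mathbb{Z}_\ell) = 0$). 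Consequently the restriction map is an isomorphism for $j \le n-2$, while for $j = n-1$ it fits into a short exact sequence
$$ 0 \to \mathrm{H}^{n-1}(X, \mathbb{Z}_\ell) \to \mathrm{H}^{n-1}(X_u, \mathbb{Z}_\ell) \to K \to 0, $$
where $K$ is a submodule of the torsion-free group $\mathrm{H}^n_c(U, \mathbb{Z}_\ell)$, hence itself torsion-free. In either case the hypothesised torsion-freeness of $\mathrm{H}^j(X, \mathbb{Z}_\ell)$ transfers to $\mathrm{H}^j(X_u, \mathbb{Z}_\ell)$ throughout this range.

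For the high band $n \le j \le 2n-2$, I would invoke Poincar\'e duality on the smooth projective variety $X_u$ of dimension $n-1$, which, combined with the $\ell$-adic universal coefficient formalism, produces a (non-canonical) isomorphism of finite torsion subgroups
$$ \mathrm{H}^j(X_u, \mathbb{Z}_\ell)_{\mathrm{tors}} \cong \mathrm{H}^{2n-1-j}(X_u, \mathbb{Z}_\ell)_{\mathrm{tors}}. $$
As $j$ varies in $[n, 2n-2]$, the complementary index $2n-1-j$ varies in $[1, n-1]$, a range already treated in the first stage, so the right-hand side vanishes and we conclude.

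The main technical obstacle is confirming the integral Artin vanishing and the torsion-freeness of $\mathrm{H}^n_c(U, \mathbb{Z}_\ell)$: Artin's theorem is usually stated for torsion coefficients, so one proves the mod-$\ell^r$ statement and passes to the inverse limit, being careful that the derived limit vanishes due to the finite generation of each term (Milnor exact sequence). Once that integral weak Lefschetz statement is in hand, the rest of the argument is purely formal diagram chasing combined with the Pontryagin self-duality of the torsion subgroup under Poincar\'e duality.
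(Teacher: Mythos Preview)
Your argument is correct and, for the delicate range $j\le n-1$, coincides with the paper's proof: both use the excision sequence for $(X,X_u)$, Artin vanishing to kill $\mathrm{H}^{k}_c(X\setminus X_u,\mathbb{Z}_\ell)$ for $k<n$, and the torsion-freeness of $\mathrm{H}^{n}_c(X\setminus X_u,\mathbb{Z}_\ell)$ (the paper derives the latter from $\mathrm{H}^{n-1}_c(X\setminus X_u,\mathbb{Z}/\ell)=0$ via universal coefficients, which is equivalent to your Verdier-duality justification). The only divergence is in the range $j>n-1$: the paper invokes the Gysin isomorphism $\mathrm{H}^{j}(X_u,\mathbb{Z}_\ell)\xrightarrow{\sim}\mathrm{H}^{j+2}(X,\mathbb{Z}_\ell)$ (the Poincar\'e dual of weak Lefschetz, applied with $\mathbb{Z}/\ell^r$ coefficients and passed to the limit) and reads off torsion-freeness directly from the hypothesis on $X$, whereas you stay inside $X_u$ and use the torsion linking pairing $\mathrm{H}^{j}(X_u,\mathbb{Z}_\ell)_{\mathrm{tors}}\cong \mathrm{H}^{2n-1-j}(X_u,\mathbb{Z}_\ell)_{\mathrm{tors}}$ to reduce to the already-settled low band. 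Both routes are standard and of comparable length; yours has the mild aesthetic advantage that once the low band is done, no further appeal to $X$ is needed.
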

\begin{proof}
    By the Lefschetz hyperplane theorem\footnote{also known as the weak-Lefschetz theorem}, we know that the induced map $\mathrm{H}^{j}(X, \mathbb{Z}_{\ell})\rightarrow \mathrm{H}^{j}(X_{u}, \mathbb{Z}_{\ell})$ is an isomorphism for $j<n-1$. Moreover, we also know, by Poincar\'e duality, that the Gysin map $\mathrm{H}^{j}(X_{u}, \mathbb{Z}_{\ell})\rightarrow \mathrm{H}^{j+2}(X, \mathbb{Z}_{\ell})$ is an isomorphism for $j>n-1$. It remains to show that $\mathrm{H}^{n-1}(X_{u}, \mathbb{Z}_{\ell})$ is torsion-free. We recall the universal coefficient theorem for the affine variety $X\setminus X_{u}$ on cohomology with compact support
    \begin{equation}
        \label{eqn:uctdgm}
    \mathrm{H}^{n-1}_{c}(X\setminus X_{u}, \mathbb{Z}/\ell \mathbb{Z})=\left(\mathrm{H}^{n-1}_{c}(X\setminus X_{u}, \mathbb{Z}_{\ell})\otimes \mathbb{Z}/\ell\mathbb{Z}\right) \oplus \mathbf{Tor}_{1}^{\mathbb{Z}_{\ell}}\left(\mathrm{H}^{n}_{c}(X\setminus X_{u}, \mathbb{Z}_{\ell}), \mathbb{Z}/\ell \mathbb{Z}\right).
    \end{equation}
By Artin vanishing and Poincar\'e duality, we know $\mathrm{H}^{n-1}_{c}(X\setminus X_{u}, \mathbb{Z}/\ell \mathbb{Z})=0$, so we have from (\ref{eqn:uctdgm}) that $\mathrm{H}^{n}_{c}(X\setminus X_{u}, \mathbb{Z}_{\ell})$ is torsion-free. Therefore, from  the relative long exact sequence associated to the pair $(X, X\setminus X_{u})$,
    \begin{equation}
        \label{eqn:rel}
    \ldots \rightarrow \mathrm{H}^{j}_{c}(X\setminus X_{u}, \mathbb{Z}_{\ell})\rightarrow \mathrm{H}^{j}(X, \mathbb{Z}_{\ell})\rightarrow \mathrm{H}^{j}(X_{u}, \mathbb{Z}_{\ell})\rightarrow\ldots
    \end{equation}
     we see that $$\mathrm{H}^{n-1}(X_{u}, \mathbb{Z}_{\ell})/\mathrm{H}^{n-1}(X, \mathbb{Z}_{\ell})$$ is torsion-free. We conclude the proof using the torsion-freeness assumption on $\mathrm{H}^{n-1}(X, \mathbb{Z}_{\ell})$. 
\end{proof}

\begin{lemma}
\label{lem:modhardlef}
    Let $\ell$ be a prime coprime to $q$, such that the cohomology groups $\mathrm{H}^{i}(X, \mathbb{Z}_{\ell})$ are all torsion-free for $0\leq i \leq 2n$ and let $X_{u}$ be a hyperplane section of $X$ from the above Lefschetz pencil. Then, the hard-Lefschetz theorem holds modulo $\ell$, i.e., we have
    \begin{equation}
    \mathrm{H}^{n-1}(X_{u}, \mathbb{Z}/\ell \mathbb{Z})\simeq \mathrm{H}^{n-1}(X, \mathbb{Z}/\ell \mathbb{Z})\oplus \mathcal{E}^{\ell}_{u}.
    \end{equation}
\end{lemma}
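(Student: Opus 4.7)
The plan is to promote the hard-Lefschetz decomposition from $\mathbb{Q}_\ell$ to an integral $\mathbb{Z}_\ell$-decomposition and then reduce modulo $\ell$, with the hypothesis feeding in through Lemma \ref{lemtr}.

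First I would extract from Lemma \ref{lemtr} slightly more than its statement: its proof actually shows that the quotient $\mathrm{H}^{n-1}(X_u, \mathbb{Z}_\ell)/i^{\ast}\mathrm{H}^{n-1}(X, \mathbb{Z}_\ell)$ is torsion-free as a $\mathbb{Z}_\ell$-module (this is what the universal coefficient argument via $\mathrm{H}^{n-1}_c(X\setminus X_u, \mathbb{Z}/\ell\mathbb{Z}) = 0$ delivers), equivalently, $\mathrm{H}^{n-1}(X, \mathbb{Z}_\ell)$ is saturated in $\mathrm{H}^{n-1}(X_u, \mathbb{Z}_\ell)$. Combined with hard Lefschetz over $\mathbb{Q}_\ell$ \cite[Theorem 4.3.9]{Weilii}, which supplies
$$\mathrm{H}^{n-1}(X_u, \mathbb{Q}_\ell) = \mathrm{H}^{n-1}(X, \mathbb{Q}_\ell) \oplus \mathcal{E}_u,$$
I would define the integral lattice of vanishing cycles by saturation, $\mathcal{E}^{\mathbb{Z}_\ell}_u := \mathrm{H}^{n-1}(X_u, \mathbb{Z}_\ell) \cap \mathcal{E}_u$, with the intersection taken inside $\mathrm{H}^{n-1}(X_u, \mathbb{Q}_\ell)$.

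The central step is then the integral splitting $\mathrm{H}^{n-1}(X_u, \mathbb{Z}_\ell) = \mathrm{H}^{n-1}(X, \mathbb{Z}_\ell) \oplus \mathcal{E}^{\mathbb{Z}_\ell}_u$. The intersection of the two summands vanishes because it vanishes over $\mathbb{Q}_\ell$ and the ambient lattice is torsion-free. For the sum, given $w \in \mathrm{H}^{n-1}(X_u, \mathbb{Z}_\ell)$, write $w = v + e$ using the $\mathbb{Q}_\ell$-decomposition, with $v \in \mathrm{H}^{n-1}(X, \mathbb{Q}_\ell)$ and $e \in \mathcal{E}_u$. The saturation observation above forces
$$v \in \mathrm{H}^{n-1}(X_u, \mathbb{Z}_\ell) \cap \mathrm{H}^{n-1}(X, \mathbb{Q}_\ell) = \mathrm{H}^{n-1}(X, \mathbb{Z}_\ell),$$
and accordingly $e = w - v$ lies in $\mathrm{H}^{n-1}(X_u, \mathbb{Z}_\ell) \cap \mathcal{E}_u = \mathcal{E}^{\mathbb{Z}_\ell}_u$.

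Finally I would tensor the integral decomposition with $\mathbb{F}_\ell$. Because $\mathrm{H}^i(X, \mathbb{Z}_\ell)$ and $\mathrm{H}^{n-1}(X_u, \mathbb{Z}_\ell)$ are torsion-free in the relevant range (by hypothesis and Lemma \ref{lemtr} respectively), the universal coefficient theorem identifies $\mathrm{H}^{n-1}(-, \mathbb{F}_\ell)$ with $\mathrm{H}^{n-1}(-, \mathbb{Z}_\ell) \otimes_{\mathbb{Z}_\ell} \mathbb{F}_\ell$, and by construction $\mathcal{E}^\ell_u = \mathcal{E}^{\mathbb{Z}_\ell}_u \otimes \mathbb{F}_\ell$; the desired mod-$\ell$ direct sum follows immediately. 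I expect the main obstacle to be not the abstract splitting but the bookkeeping of ensuring that the saturated lattice $\mathcal{E}^{\mathbb{Z}_\ell}_u$ constructed above genuinely agrees with the sheaf of integral $\ell$-adic vanishing cycles defined via the Picard--Lefschetz formula \eqref{eqn:piclef}, so that the decomposition is compatible with the lisse sheaf structure on $U$ used in the monodromy arguments to follow.
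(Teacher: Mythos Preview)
Your argument is correct and follows essentially the same route as the paper: both establish an integral splitting
\[
\mathrm{H}^{n-1}(X_u,\mathbb{Z}_\ell)\simeq \mathrm{H}^{n-1}(X,\mathbb{Z}_\ell)\oplus \mathcal{E}^{\mathbb{Z}_\ell}_u
\]
using hard Lefschetz over $\mathbb{Q}_\ell$ together with the torsion-freeness of $\mathrm{H}^{n-1}(X_u,\mathbb{Z}_\ell)/\mathrm{H}^{n-1}(X,\mathbb{Z}_\ell)$ extracted from Lemma~\ref{lemtr}, and then reduce modulo~$\ell$.

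The only packaging difference is that the paper works with Deligne's short exact sequence
\[
0\longrightarrow \mathcal{E}^{\mathbb{Z}_\ell}_u\longrightarrow \mathrm{H}^{n-1}(X_u,\mathbb{Z}_\ell)\longrightarrow \mathrm{H}^{n+1}(X,\mathbb{Z}_\ell)\longrightarrow 0
\]
from \cite[(4.3.3.2)]{Weilii} and shows that the integral hard-Lefschetz map $\lambda\colon \mathrm{H}^{n-1}(X,\mathbb{Z}_\ell)\to \mathrm{H}^{n+1}(X,\mathbb{Z}_\ell)$ is an isomorphism, which furnishes the section. This has the advantage of taking $\mathcal{E}^{\mathbb{Z}_\ell}_u$ in its intrinsic guise as the kernel of the Gysin map, so the compatibility issue you flag at the end does not arise: since that Gysin map is surjective onto a torsion-free module, its kernel is automatically saturated, and hence coincides with your $\mathrm{H}^{n-1}(X_u,\mathbb{Z}_\ell)\cap \mathcal{E}_u$. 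Your lattice-saturation phrasing is a perfectly good substitute once this identification is made.
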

\begin{proof}
    From the diagram \cite[(4.3.3.2)]{Weilii}, we see that the exact sequence
    $$
    0\rightarrow \mathcal{E}^{\mathbb{Z}_{\ell}}_{u}\rightarrow \mathrm{H}^{n-1}(X_{u}, \mathbb{Z}_{\ell})\rightarrow \mathrm{H}^{n+1}(X, \mathbb{Z}_{\ell})\rightarrow 0
    $$
    splits as the terms involved are all torsion-free. Next, one notices that the hard-Lefschetz map
    $$
   \lambda : \mathrm{H}^{n-1}(X, \mathbb{Z}_{\ell})\rightarrow \mathrm{H}^{n+1}(X, \mathbb{Z}_{\ell})
    $$
    obtained by taking cup-product with the class of $X_{u}$ is injective by the hard-Lefschetz theorem and the fact that $\mathrm{H}^{n-1}(X, \mathbb{Z}_{\ell})$ is torsion-free. The map is also surjective as we know $$\mathrm{H}^{n-1}(X_{u}, \mathbb{Z}_{\ell})/\mathrm{H}^{n-1}(X, \mathbb{Z}_{\ell})$$ is torsion-free. Further, we note that $\mathrm{H}^{n-1}(X, \mathbb{Z}_{\ell})\cap \mathcal{E}^{\mathbb{Z}_{\ell}}_{u}\subset \mathrm{H}^{n-1}(X_{u}, \mathbb{Z}_{\ell})_{\mathrm{tors}}=0$, by Lemma~\ref{lemtr}. Therefore, we have
    $$
    \mathrm{H}^{n-1}(X_{u}, \mathbb{Z}_{\ell})\simeq \mathrm{H}^{n-1}(X, \mathbb{Z}_{\ell})\oplus \mathcal{E}^{\mathbb{Z}_{\ell}}_{u}.
    $$
    Tensoring by $\mathbb{Z}/\ell \mathbb{Z}$ and using torsion-freeness once more gives the result. 
\end{proof}

\begin{lemma}[Irreducibility]
\label{lem:irr}
    The representation $\rho_{\ell}:\pi_{1}(U, u)\rightarrow \mathrm{GL}(r, \mathbb{Z}/\ell \mathbb{Z})$ associated to the local system $\mathcal{E}^{\ell}$ of mod -- $\ell$ vanishing cycles on $U$ is irreducible.
\end{lemma}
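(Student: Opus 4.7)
The plan is to adapt Deligne's irreducibility argument for $\mathcal{E}\otimes\mathbb{Q}_\ell$ (Weil II, 4.4.1) to mod-$\ell$ coefficients, using the mod-$\ell$ hard-Lefschetz decomposition of Lemma~\ref{lem:modhardlef} in place of its rational counterpart. The three ingredients the proof rests on are the Picard-Lefschetz formula~(\ref{eqn:piclef}), the transitivity of the $\pi_1(U,u)$-action on the vanishing cycles (up to sign), and the non-degeneracy of the restriction of the Poincar\'e pairing to $\mathcal{E}^\ell_u$.

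The first step would be to fix a nonzero $\pi_1(U,u)$-invariant subspace $W\subset \mathcal{E}^\ell_u$ and exploit Picard-Lefschetz: for each $z\in Z$ and $\gamma\in W$,
\[
\sigma_z(\gamma)-\gamma \;=\; \pm\,\epsilon_z \langle \gamma,\delta_z\rangle\,\delta_z \;\in\; W,
\]
so for each $z$, either $\delta_z \in W$, or $\langle\gamma,\delta_z\rangle = 0$ for every $\gamma\in W$, i.e.\ $W\subset \delta_z^\perp$. Next I would invoke the conjugacy of vanishing cycles: the Veronese re-embedding of degree $3$ arranged in the footnote of the introduction guarantees that the dual variety of $X$ is irreducible, and hence that $\pi_1(U,u)$ acts transitively on $\{\pm\delta_z : z\in Z\}$. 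Since each of the two alternatives above is stable under $\pi_1$-conjugation, exactly one of them must hold uniformly in $z$.

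In the former case, $W$ contains every $\delta_z$, and these span $\mathcal{E}^\ell_u$ by definition, so $W = \mathcal{E}^\ell_u$. In the latter case, $W\subset \bigcap_{z\in Z}\delta_z^\perp = (\mathcal{E}^\ell_u)^\perp$ inside $\mathrm{H}^{n-1}(X_u,\mathbb{Z}/\ell\mathbb{Z})$. To force $W=0$ I would appeal to non-degeneracy of the Poincar\'e pairing restricted to $\mathcal{E}^\ell_u$: the integral decomposition
\[
\mathrm{H}^{n-1}(X_u,\mathbb{Z}_\ell) \;\simeq\; \mathrm{H}^{n-1}(X,\mathbb{Z}_\ell) \,\oplus\, \mathcal{E}^{\mathbb{Z}_\ell}_u
\]
provided by Lemma~\ref{lem:modhardlef} is orthogonal for Poincar\'e duality (a consequence of hard-Lefschetz, as in Weil II, 4.3.5), and thanks to torsion-freeness both this orthogonality and the perfectness of the pairing survive reduction modulo $\ell$. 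Hence $\mathcal{E}^\ell_u \cap (\mathcal{E}^\ell_u)^\perp = 0$, which yields $W=0$ and contradicts $W\neq 0$.

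The hardest step should be the careful tracking of the orthogonality of the integral hard-Lefschetz decomposition under reduction modulo $\ell$; the remaining inputs---Picard-Lefschetz, conjugacy of vanishing cycles, and the spanning property of the $\delta_z$---are classical and insensitive to whether the coefficients are rational or mod $\ell$, once torsion-freeness is in force via the choice of $\ell$ from \Cref{cor:ell}.
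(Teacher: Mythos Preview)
Your proposal is correct and follows essentially the same route as the paper: both arguments rest on the Picard--Lefschetz formula, the conjugacy of the vanishing cycles, and the non-degeneracy of the pairing on $\mathcal{E}^{\ell}_{u}$ supplied by Lemma~\ref{lem:modhardlef}. The only cosmetic difference is the order of the steps---the paper first uses non-degeneracy to locate a $\delta_{z}$ with $\langle\gamma,\delta_{z}\rangle\neq 0$ and then invokes conjugacy, whereas you set up the dichotomy via Picard--Lefschetz, propagate it by conjugacy, and eliminate the orthogonal branch with non-degeneracy.
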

\begin{proof}
    Let $W$ denote the representation corresponding to the mod -- $\ell$ vanishing cycles $\mathcal{E}^{\ell}_{u}$ and let $W'\subset W$ be a subspace fixed under the action of $\pi_{1}(U, u)$. Let $\gamma \in W'$ be such that $ \gamma \ne 0$. We claim firstly that $\langle \gamma, \delta_{z}\rangle \ne 0$ for a vanishing cycle $\delta_{z}$ for some $z\in Z$. Otherwise, we would have $\gamma\in W^{\perp}\cap W$, which is trivial by Lemma~\ref{lem:modhardlef}. In particular, by the Picard-Lefschetz formula (\ref{eqn:piclef}), we have $\sigma_{z}(\gamma)-\gamma=\langle \gamma, \delta_{z}\rangle \cdot \delta_{z}\in W'$, implying $\delta_{z}\in W'$. However, by \cite[Theorem 5.2]{illusie}, the vanishing cycles are all conjugate under the action of $\pi_{1}(U, u)$, so we must have $W'=W$.
\end{proof}

\begin{theorem}[Big monodromy]
\label{thm:bigm}
    Assume the sheaf $\mathcal{E}^{\mathbb{Z}_{\ell}}$ has big monodromy, i.e., the associated representation $\rho:\pi_{1}(U, u)\rightarrow \mathrm{GL}(\mathcal{E}^{\mathbb{Z}_{\ell}}_{u})$ has Zariski dense image in the corresponding symplectic or orthogonal groups. Then the sheaf $\mathcal{E}^{\ell}$ has big monodromy, i.e., the mod -- $\ell$ representation $\rho_{\ell}: \pi_{1}(U, u)\rightarrow \mathrm{GL}(r, \mathbb{Z}/\ell \mathbb{Z})$ has maximal image. In particular, if $n$ is even, then $\mathrm{im}(\rho_{\ell})= \mathrm{Sp}(r, \mathbb{Z}/\ell \mathbb{Z})$ and if $n$ is odd, $\mathrm{im}(\rho_{\ell})$ is one of the following subgroups of the orthogonal group $\mathrm{O}(r, \mathbb{Z}/\ell\mathbb{Z})$
    \begin{itemize}
        \item [(a)] the kernel of the spinor norm,
        \item [(b)] the kernel of the product of the spinor norm and the determinant map,
        \item[(c)] the full orthogonal group.
    \end{itemize}
    
\end{theorem}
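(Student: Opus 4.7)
My strategy is to apply Hall's theorem on big mod-$\ell$ monodromy \cite{hall} to the local system $\mathcal{E}^{\mathbb{Z}_{\ell}}$ on $U$. Hall's result asserts roughly that, for an $\ell$-adic local system on a smooth affine curve whose monodromy is Zariski dense in a classical group $\mathrm{Sp}$ or $\mathrm{O}$ preserving a non-degenerate form, if the mod-$\ell$ reduction is irreducible and its image contains a transvection (symplectic case) or a non-trivial reflection (orthogonal case), then the mod-$\ell$ image is as large as possible---it contains the derived subgroup of the classical group. Two of the three required inputs are already in hand: \Cref{lem:irr} supplies the irreducibility of $\rho_{\ell}$, and the Zariski density of the $\ell$-adic image is precisely our standing hypothesis.

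The remaining input comes from the local geometry of the Lefschetz pencil. For each $z \in Z$, the Picard-Lefschetz formula~(\ref{eqn:piclef}) shows that the local monodromy $\sigma_z$ acts on $\mathcal{E}^{\ell}_u$ by $\gamma \mapsto \gamma \pm \epsilon_z \langle \gamma, \delta_z\rangle \delta_z$. When $n$ is even the Poincar\'e duality pairing on $\mathrm{H}^{n-1}$ is alternating, so this is a symplectic transvection along the image of $\delta_z$ modulo $\ell$; when $n$ is odd the pairing is symmetric, so the same formula exhibits an orthogonal reflection. The non-triviality of $\delta_z \bmod \ell$ follows from the torsion-freeness hypothesis together with \Cref{lem:modhardlef}, since the integral lift in $\mathcal{E}^{\mathbb{Z}_{\ell}}_u$ is primitive and the mod-$\ell$ reduction of $\mathcal{E}^{\mathbb{Z}_{\ell}}$ is identified with $\mathcal{E}^{\ell}$. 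Hence Hall's theorem applies.

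The conclusion of Hall's theorem gives that $\mathrm{im}(\rho_{\ell})$ contains the derived subgroup of the ambient classical group over $\mathbb{F}_{\ell}$. In the symplectic case ($n$ even), $\mathrm{Sp}(r, \mathbb{F}_{\ell})$ is perfect for $\ell \geq 3$ and $r \geq 2$, so the derived subgroup is the whole group, forcing $\mathrm{im}(\rho_{\ell}) = \mathrm{Sp}(r, \mathbb{Z}/\ell\mathbb{Z})$. In the orthogonal case ($n$ odd), the derived subgroup is $\Omega(r, \mathbb{F}_{\ell})$, the kernel of the spinor norm. The image then sits between $\Omega(r, \mathbb{F}_{\ell})$ and $\mathrm{O}(r, \mathbb{F}_{\ell})$, and the quotient $\mathrm{O}/\Omega$ is generated by the determinant and the spinor norm, an abelian group of order $4$ (for $\ell$ odd and $r$ in our range). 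Classifying the subgroups of $\mathrm{O}/\Omega$ pinpoints the image to one of the three options (a), (b), (c).

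The main obstacle will be a careful verification of Hall's hypotheses in our exact setting: ensuring that the class $\delta_z \bmod \ell$ is non-zero (and, for the orthogonal case, isotropic as required) so that $\sigma_z$ is a genuine transvection or reflection of the correct type. A second, more delicate point is that we cannot narrow down which of (a), (b), (c) occurs from the present hypotheses alone: this depends on whether the images of determinant and spinor norm on the generating reflections are trivial, which is an intrinsic feature of the geometry of the pencil and is precisely the reason the statement lists all three possibilities rather than singling one out.
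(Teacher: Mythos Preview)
Your proposal is correct and follows essentially the same approach as the paper: apply Hall's theorem \cite[Theorem~3.1]{hall} using the irreducibility from \Cref{lem:irr} together with the transvections (symplectic case) or reflections (orthogonal case) supplied by the Picard--Lefschetz formulas. Your write-up is in fact more careful than the paper's on the non-triviality of $\delta_z \bmod \ell$ and on the structure of $\mathrm{O}/\Omega$; the only small point to make explicit is that the presence of a Picard--Lefschetz reflection (determinant $-1$) in the image is precisely what excludes $\Omega$ and $\mathrm{SO}$ from the five subgroups between $\Omega$ and $\mathrm{O}$, leaving exactly the three options (a), (b), (c).
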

\begin{proof}
We intend to apply \cite[Theorem 3.1]{hall} to $W$. Assume firstly that $n$ is even. In this case, the Poincar\'e duality pairing is alternating and $W$ is even-dimensional. Then, the elements $\rho_{\ell}(\sigma_{i})$ act via the Picard-Lefschetz formulas (\ref{eqn:piclef}) as transvections on $W$. Using the irreducibility from Lemma~\ref{lem:irr}, we may conclude that the image of $\rho_{\ell}$ is the full symplectic group $\mathrm{Sp}(r, \mathbb{F}_{\ell})$.

In the case $n$ is odd, the pairing is symmetric, so the monodromy is orthogonal. Here, the Picard-Lefschetz formulas act by reflections, in particular, even as isotropic shears. We again appeal to \cite[Theorem 3.1]{hall} to conclude that the geometric mod -- $\ell$ monodromy must be one of the subgroups of the orthogonal group of index at most two (other than the special orthogonal group), as listed above. 
\end{proof}
\begin{remark}
    We note that using work of Katz \cite[Theorem 2.2.4]{katzlarsen}, we may assume that $\mathcal{E}^{\mathbb{Z}_{\ell}}$ has big monodromy always (i.e., its image is infinite), at the cost of a Veronese embedding of constant degree.
    \end{remark}


\section{Estimates in algebraic groups}

In this section, we obtain probability estimates in order to prove our main Theorem~\ref{thm:main}. Specifically, we investigate the likelihood of a matrix, chosen uniformly in symplectic or orthogonal similitude groups having characteristic polynomial coprime to a given one of the respective type. 

\subsection{Symplectic monodromy}
We begin with the case where $n=\dim X$ is even, so the monodromy is symplectic. Consider the symplectic group $\mathrm{Sp}(s, \mathbb{F}_{\ell})$, where $\ell$ is a prime and $s=2r$. We have the exact sequence
$$
1\rightarrow \mathrm{Sp}(s, \mathbb{F}_{\ell})\rightarrow \mathrm{GSp}(s, \mathbb{F}_{\ell})\rightarrow \mathbb{F}_{\ell}^{*}\rightarrow 1
$$
where $\mathrm{GSp}(r, \mathbb{F}_{\ell})$ is the group of symplectic similitudes. Let $\lambda\in \mathbb{F}_{\ell}^{*}$ and write by $\mathrm{GSp}(r, \mathbb{F}_{\ell})^{\lambda}$, the conjugacy class of similitudes with multiplicator $\lambda$. The following is the set of all possible (reversed) characteristic polynomials of symplectic similitudes with multiplier $\lambda$
$$
M^{\lambda}_{r}:=\{f(T)=1+a_{1}t+\ldots a_{2r-1}t^{2r-1}+a_{2r}t^{2r} \ \vert \ a_{i}\in \mathbb{F}_{\ell}, \ a_{2r-i}=\alpha^{r-i}a_{i}, \ 0\leq i\leq r\}.
$$
\begin{proposition}
    \label{symplectic}
 Let $f(T)$ be the characteristic polynomial of a matrix in $\mathrm{GSp}(2r, \mathbb{F}_{\ell})^{\lambda}$ for some $\lambda\in \mathbb{F}_{\ell}^{*}$. Denote by $C\subset \mathrm{GSp}(2r, \mathbb{F}_\ell)$ the set of matrices with characteristic polynomial not coprime with $f(T)$. Then for $\ell>119r^{2}$,
$$
\frac{\#\left(C\cap \mathrm{GSp}(2r, \mathbb{F}_{\ell})^{\lambda}\right)}{\# \mathrm{Sp}(2r, \mathbb{F}_{\ell})} \;\leq\; 1/4\;.
$$

\end{proposition}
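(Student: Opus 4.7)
The plan is to count $C\cap \mathrm{GSp}(2r,\mathbb{F}_\ell)^\lambda$ by factoring through the characteristic polynomial map
\[
\chi : \mathrm{GSp}(2r,\mathbb{F}_\ell)^\lambda \longrightarrow M^\lambda_r,
\]
separating (i) the enumeration of characteristic polynomials $g\in M^\lambda_r$ with $\gcd(f,g)\neq 1$, and (ii) a uniform upper bound on the fiber sizes of $\chi$. Since $\#\mathrm{GSp}(2r,\mathbb{F}_\ell)^\lambda = \#\mathrm{Sp}(2r,\mathbb{F}_\ell)$ (the similitude coset being nonempty), this produces an equivalent proportion inside $\mathrm{GSp}^\lambda$.

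For (i), the self-duality $a_{2r-i}=\lambda^{r-i}a_i$ together with $a_0=1$ leaves $a_1,\dots,a_r$ as free parameters, so $\#M^\lambda_r=\ell^r$. For each root $\alpha$ of $f$ in $\overline{\mathbb{F}}_\ell^\times$, the condition $g(\alpha^{-1})=0$ is an affine linear equation in $(a_1,\dots,a_r)$, which is easily checked to be nontrivial; moreover the self-duality forces the companion root $\lambda/\alpha$ to impose the same condition, since $g(\alpha/\lambda) = \lambda^{-r}\alpha^{2r}\, g(1/\alpha)$. The union bound over the at most $2r$ roots of $f$ then yields
\[
\#\{g \in M^\lambda_r : \gcd(f,g)\neq 1\} \;\leq\; 2r\cdot \ell^{r-1}.
\]

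For (ii), I would aim for the uniform fiber bound
\[
\#\chi^{-1}(g) \;\leq\; C(r)\cdot \#\mathrm{Sp}(2r,\mathbb{F}_\ell)/\ell^r
\]
with $C(r)$ linear in $r$. On the regular semisimple locus this is immediate: each fiber is a single conjugacy class of size $\#\mathrm{GSp}^\lambda / \#T$ for a maximal torus $T$ of order $\sim \ell^r$. For non-regular or non-semisimple classes I would stratify by Jordan type, picking an eigenvector $v$ for an eigenvalue $\alpha$, pairing it with an eigenvector $w$ for $\lambda/\alpha$ to form a symplectic plane, and restricting to the orthogonal complement so as to induct on $\mathrm{Sp}(2r-2)$, accumulating the resulting constants.

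Combining the two estimates,
\[
\frac{\#(C\cap \mathrm{GSp}(2r,\mathbb{F}_\ell)^\lambda)}{\#\mathrm{Sp}(2r,\mathbb{F}_\ell)} \;\leq\; \frac{2r\,C(r)}{\ell},
\]
and the hypothesis $\ell > 119r^2$ suffices once $C(r)\le 15r$. The main obstacle is establishing the explicit linear-in-$r$ fiber bound: the degenerate eigenvalues $\alpha=\pm\sqrt{\lambda}$ (where the symplectic-plane pairing collapses onto an isotropic line, so the inductive step fails as stated) form an exceptional stratum that must be handled separately, and the bookkeeping for the corresponding Jordan types is what inflates the numerical constant to $119$ in the hypothesis on $\ell$.
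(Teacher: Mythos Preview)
The paper does not actually prove this proposition: its entire argument is the single sentence ``This is \cite[Lemma 3.10]{rsv}.'' So there is no in-paper proof to compare against, and your proposal is effectively an attempt to reconstruct what the cited lemma does.

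Your two-step plan (count bad polynomials in $M^\lambda_r$, then bound the fibers of $\chi$) is the natural strategy, and the target inequality $\#(C\cap \mathrm{GSp}^\lambda)/\#\mathrm{Sp} \le 2r\,C(r)/\ell$ is exactly the shape one wants. However, as you yourself flag, the proposal is not a proof: the entire weight sits on the uniform fiber bound $\#\chi^{-1}(g)\le C(r)\,\#\mathrm{Sp}(2r,\mathbb{F}_\ell)/\ell^r$ with $C(r)$ linear in $r$, and you only sketch an inductive scheme for it while conceding that the degenerate strata (self-paired eigenvalues $\alpha^2=\lambda$, non-semisimple types) are not handled. That is precisely the substantive content of the cited lemma, so what you have written is a correct outline together with an honest admission that the hard step is missing. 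A cleaner route to the fiber bound, avoiding the case-by-case Jordan bookkeeping, is to invoke the flatness of the Steinberg quotient $\mathrm{Sp}(2r)\to \mathbb{A}^r$ (all fibers have the same dimension $2r^2$) and then control the number of $\mathbb{F}_\ell$-points of each fiber via Lang--Weil; this gives $C(r)$ of the right order without the ad hoc induction. One minor point in step~(i): the hyperplane condition $g(\beta)=0$ is formulated over $\overline{\mathbb{F}}_\ell$, whereas you are counting $g$ with coefficients in $\mathbb{F}_\ell$; you should either work with irreducible factors of $f$ over $\mathbb{F}_\ell$ or observe that the Galois orbit of the linear condition still cuts out an $\mathbb{F}_\ell$-subspace of the expected codimension.
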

\begin{proof}
    This is \cite[Lemma 3.10]{rsv}.
\end{proof}

\subsection{Orthogonal monodromy}
We are now concerned with the case when $n=\dim X$ is odd. In particular, we have that the action of Frobenius on $\mathrm{H}^{n-1}(X_{u}, \mathbb{Z}/\ell \mathbb{Z})$ is via an orthogonal similitude, i.e., the image  $\rho_{\ell}(\pi_{1}(U_{0}, u))\subset \mathrm{GO}(\mathrm{V})$, where $\mathrm{V}$ is the subspace $\mathcal{E}^{\ell}_{u}\subset \mathrm{H}^{n-1}(X_{u}, \mathbb{Z}/\ell \mathbb{Z})$ of dimension $s$, regarded as an $\mathbb{F}_{\ell}$ -- vector space. We begin by recalling the well-known bounds for the size of the orthogonal group.

\begin{lemma}
    We have
    $$
  2\ell^{2r^{2}}(\ell-1)^{r} \leq  \#\mathrm{O}(2r+1, \mathbb{F}_{\ell})= 2\ell^{r^{2}}\prod_{i=1}^{r}(\ell^{2i}-1)\leq 2\ell^{2r^{2}+r}
    $$ and
    $$
 \ell^{2r^{2}}(\ell-1)^{r}   \leq \#\mathrm{O}(2r, \mathbb{F}_{\ell})\leq 2\ell^{2r^{2}+r}
    $$
\end{lemma}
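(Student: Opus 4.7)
The plan is to invoke the classical order formulas for finite orthogonal groups over $\mathbb{F}_{\ell}$ and then deduce each inequality by elementary estimation of the factors $\ell^{2i}-1$ that appear. For the odd-dimensional case, the equality $\#\mathrm{O}(2r+1,\mathbb{F}_{\ell}) = 2\ell^{r^{2}}\prod_{i=1}^{r}(\ell^{2i}-1)$ is standard; for the even-dimensional case I would use the analogous $\#\mathrm{O}^{\varepsilon}(2r,\mathbb{F}_{\ell}) = 2\ell^{r(r-1)}(\ell^{r}-\varepsilon)\prod_{i=1}^{r-1}(\ell^{2i}-1)$, where $\varepsilon\in\{\pm 1\}$ indexes the isometry class of the underlying quadratic form. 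These formulas I would simply cite from any standard reference on finite classical groups.

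The key elementary sandwich is $\ell^{2i-1}(\ell-1)\leq \ell^{2i}-1\leq \ell^{2i}$ for each $i\geq 1$: the right inequality is trivial, and the left follows from $\ell^{2i}-1\geq \ell^{2i}-\ell^{2i-1}=(\ell-1)\ell^{2i-1}$ since $\ell^{2i-1}\geq 1$. Taking products over $i=1,\ldots,r$ gives
$$\ell^{r^{2}}(\ell-1)^{r}\;\leq\; \prod_{i=1}^{r}(\ell^{2i}-1)\;\leq\; \ell^{r(r+1)},$$
and analogously over $i=1,\ldots,r-1$ for the even case. Substituting into the odd-dimensional formula immediately produces both stated bounds $2\ell^{2r^{2}}(\ell-1)^{r}\leq \#\mathrm{O}(2r+1,\mathbb{F}_{\ell})\leq 2\ell^{2r^{2}+r}$.

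For the even-dimensional bounds, the analogous $(r-1)$-fold product estimate is combined with the extra factor $\ell^{r}-\varepsilon$, sandwiched by $(\ell-1)\ell^{r-1}\leq \ell^{r}-\varepsilon\leq 2\ell^{r}$ (handled case-by-case on the sign of $\varepsilon$, using $\ell^{r-1}\geq 1$ when $\varepsilon=+1$ and $\ell^r+1\leq 2\ell^r$ for $\varepsilon=-1$). Substitution then gives the two stated inequalities. The entire argument is routine bookkeeping of exponents; no genuine mathematical content is needed beyond the classical order formulas, and I do not anticipate any serious obstacle. The one point warranting care is ensuring the lower bound in the even-dimensional case holds uniformly over both isometry types $\varepsilon\in\{\pm 1\}$ and for the ranges of $\ell,r$ of interest — this is where constants may need to be tuned slightly to match the clean form stated in the lemma.
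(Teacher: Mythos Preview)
The paper does not prove this lemma; it is introduced with the phrase ``we begin by recalling the well-known bounds'' and left unproved. Your approach---citing the classical order formulas for the finite orthogonal groups and sandwiching each factor via $\ell^{2i-1}(\ell-1)\le \ell^{2i}-1\le \ell^{2i}$---is the standard verification, and it handles the odd-dimensional equality and both upper bounds cleanly.

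Your caveat about the even-dimensional lower bound is well placed and in fact points to a genuine slip in the statement: as printed, $\ell^{2r^{2}}(\ell-1)^{r}\le \#\mathrm{O}(2r,\mathbb{F}_{\ell})$ fails already for $r=1$, $\ell=3$, where $\#\mathrm{O}^{+}(2,\mathbb{F}_{3})=2(\ell-1)=4$ while the claimed lower bound is $\ell^{2}(\ell-1)=18$. Your estimates instead give $2\ell^{2r^{2}-2r}(\ell-1)^{r}$ as the uniform lower bound over both isometry types, which is the correct inequality. This looks like a typographical error in the exponent rather than a defect in your method; only the upper bound feeds into the subsequent error-term estimates in the paper, so the discrepancy is harmless for the applications.
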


Let $
N_{r}^{\lambda}
$
now be the space of reciprocal polynomials of degree at most $s=2r$, or $s=2r+1$ in one variable, with multiplier $\lambda$ and coefficients in $\mathbb{F}_{\ell}$. 
We may identify it with the affine space $\mathbb{A}^{r}$. 
Like in the symplectic case, we have an exact sequence

\begin{equation}
\label{exac}
1\rightarrow \mathrm{O}(s, \mathbb{F}_{\ell})\rightarrow \mathrm{GO}(s, \mathbb{F}_{\ell})\rightarrow \mathbb{F}_{\ell}^{*}\rightarrow 1
\end{equation}

For $\lambda\in \mathbb{F}_{\ell}^{*}$, consider a map 
$$\Psi: \mathrm{GO}(s, \overline{\mathbb{F}}_{\ell})^{\lambda}\rightarrow \mathbb{A}_{\overline{\mathbb{F}}_\ell}^{r}$$
where a matrix is mapped to its (reversed) characteristic polynomial. The map $\Psi$ is a morphism of algebraic varieties.
We know that $\dim \mathrm{O}(s, \mathbb{F}_{\ell})=s(s-1)/2$. Given a polynomial $f(T)$ that we know is the characteristic polynomial of a matrix in $\mathrm{GO}(s, \mathbb{F}_{\ell})$, we seek to estimate the size of $\Psi^{-1}(W)\cap \mathrm{GO}(s, \mathbb{F}_{\ell})^{\lambda}$, where $W\subset \mathbb{A}^{r}$ parametrises those polynomials which have a factor common with $f(T)$. The map $\Psi$ is clearly surjective over $\overline{\mathbb{F}}_{\ell}$, so applying the theorem on fibre dimension, we see that generically, for $x\in \mathbb{A}^{r}$, we have
$$
\dim \Psi^{-1}(x)=s(s-1)/2 -r\leq 2r^{2}.
$$
We observe the following next.

\begin{lemma}
\label{orthlem}
The fibre dimension of $\Psi$ is $s(s-1)/2-r$ on the open subset $Y$ of $\mathbb{A}^{r}$ parametrising those characteristic polynomials with distinct roots. Moreover, writing $V=\mathbb{A}^{r}\setminus Y$, we have
 $$   \frac{\#V(\mathbb{F}_{\ell})}{\ell^{r}}\leq O(1/\ell)$$
 where the implied constant is independent of $\ell$ and depends linearly on $r$. Further,
 $$
 \frac{\#\Psi^{-1}(Y)(\mathbb{F}_{\ell})}{\mathrm{O}(s, \mathbb{F}_{\ell})}\geq 1-\Omega(1/\ell),
 $$
 where now, the implied constant is independent of $\ell$ and of the form $\exp(\mathrm{poly}(r))$.
\end{lemma}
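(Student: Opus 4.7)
The plan is to prove the three assertions in turn, combining the invariant theory of the orthogonal group with standard point-counting estimates over $\mathbb{F}_\ell$.

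\textbf{Fibre dimension on $Y$.} A matrix $M \in \mathrm{GO}(s, \overline{\mathbb{F}}_\ell)^\lambda$ whose characteristic polynomial lies in $Y$ is regular semisimple; its centraliser in $\mathrm{O}(s)$ is then a maximal torus, of dimension equal to the rank $r$ of $\mathrm{O}(s)$ in both parities of $s$. Two such matrices with the same characteristic polynomial are $\mathrm{O}(s)$-conjugate over $\overline{\mathbb{F}}_\ell$: the reciprocity of the polynomial pairs eigenvalues into $\{\alpha, \lambda/\alpha\}$, and the restriction of the bilinear form to each such eigenspace pair is canonically hyperbolic (with the odd-dimensional case contributing the fixed eigenvalue on a one-dimensional anisotropic summand). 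Consequently each fibre of $\Psi$ over $Y$ is a single $\mathrm{O}(s)$-orbit of dimension $\dim \mathrm{O}(s) - r = s(s-1)/2 - r$.

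\textbf{Counting $V(\mathbb{F}_\ell)$.} Identify $N_r^\lambda$ with $\mathbb{A}^r$ via the coefficient map. Then $V$ is the vanishing locus of the discriminant $\mathrm{Disc}(f)$, a non-zero polynomial of degree $2(s-1)$ in the coefficients. The Schwartz--Zippel lemma yields $\#V(\mathbb{F}_\ell) \leq 2(s-1)\,\ell^{r-1}$, so $\#V(\mathbb{F}_\ell)/\ell^r \leq 2(s-1)/\ell$, which is $O(1/\ell)$ with implied constant linear in $r$.

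\textbf{Counting $\Psi^{-1}(Y)$.} Since (\ref{exac}) exhibits $\mathrm{GO}(s, \mathbb{F}_\ell)^\lambda$ as an $\mathrm{O}(s, \mathbb{F}_\ell)$-torsor, one has $\#\mathrm{GO}(s,\mathbb{F}_\ell)^\lambda = \#\mathrm{O}(s, \mathbb{F}_\ell)$ and
\[
\#\Psi^{-1}(Y)(\mathbb{F}_\ell) = \#\mathrm{O}(s, \mathbb{F}_\ell) - \#\bigl(\Psi^{-1}(V)(\mathbb{F}_\ell) \cap \mathrm{GO}(s, \mathbb{F}_\ell)^\lambda\bigr).
\]
The Steinberg-type statement for reductive groups (equivalently, flatness of the adjoint quotient combined with Chevalley restriction) ensures that every fibre of $\Psi$ has dimension at most $\dim \mathrm{O}(s) - r = s(s-1)/2 - r$. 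A uniform point count for a family of varieties whose degree and number of defining equations are bounded by $\exp(\mathrm{poly}(r))$ then yields
\[
\max_{f \in V(\mathbb{F}_\ell)} \#\Psi^{-1}(f)(\mathbb{F}_\ell) \leq C(r)\,\ell^{s(s-1)/2 - r},
\]
with $C(r) = \exp(\mathrm{poly}(r))$. Combining with the previous bound,
\[
\#\bigl(\Psi^{-1}(V)(\mathbb{F}_\ell) \cap \mathrm{GO}^\lambda\bigr) \leq 2(s-1)\,C(r)\,\ell^{s(s-1)/2 - 1},
\]
and dividing by $\#\mathrm{O}(s, \mathbb{F}_\ell) \geq \ell^{s(s-1)/2}(1 - O(1/\ell))$ yields the claimed lower bound $1 - \Omega(1/\ell)$ with implied constant of the form $\exp(\mathrm{poly}(r))$.

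\textbf{Main obstacle.} The delicate step is the uniform fibre-dimension bound over the entirety of $V$, not just over $Y$: although regular semisimple fibres are transparently of dimension $s(s-1)/2 - r$, one must show that the more degenerate characteristic polynomials (with many repeated roots, or corresponding to non-semisimple elements) also have fibres of no greater dimension. This is a general fact in the representation theory of reductive groups, but producing an effective Lang--Weil constant of the shape $\exp(\mathrm{poly}(r))$ demands either an explicit stratification of $\Psi$ by conjugacy types with a case-analysis of Jordan blocks, or the invocation of a uniform point-counting bound for varieties of bounded degree in a bounded-dimensional ambient space, of the sort already deployed in the torsion-bound section.
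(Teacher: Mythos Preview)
Your treatment of the first two assertions matches the paper's: the fibre over a point of $Y$ is an orbit with stabiliser a maximal torus of rank $r$, and $V$ is the discriminant hypersurface whose degree is linear in $r$, so Schwartz--Zippel (or the reference the paper cites) gives the $O(1/\ell)$ bound.

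For the third assertion your route diverges from the paper's, and the detour is what creates the ``main obstacle'' you flag. You try to bound $\#\Psi^{-1}(V)(\mathbb{F}_\ell)$ fibrewise, summing $\max_f \#\Psi^{-1}(f)(\mathbb{F}_\ell)$ over $f\in V(\mathbb{F}_\ell)$; this forces you to control the dimension of \emph{every} fibre of $\Psi$, including the degenerate ones, and so you reach for Steinberg flatness of the adjoint quotient (with the attendant worries about $\mathrm{O}(s)$ not being simply connected, and about working on a coset $\mathrm{GO}(s)^\lambda$ rather than the group itself). The paper sidesteps all of this: having shown that the fibre dimension over $Y$ is the generic one, it follows that $\Psi^{-1}(Y)$ is open and dense in $\mathrm{GO}(s)^\lambda$, hence $\Psi^{-1}(V)$ is a proper closed subvariety of codimension at least one and degree $\exp(\mathrm{poly}(r))$. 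A single application of the Lang--Weil estimate to $\Psi^{-1}(V)$ as a whole then gives $\#\Psi^{-1}(V)(\mathbb{F}_\ell)/\#\mathrm{O}(s,\mathbb{F}_\ell)=O(1/\ell)$ directly. No fibrewise analysis over $V$, and no Steinberg input, is required. Your argument is not wrong, but the obstacle you identify is an artifact of the fibre-by-fibre strategy; the paper's global codimension-one argument dissolves it.
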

\begin{proof}
For an element in $Y$, its fibre consists of a conjugacy class in $\mathrm{GO}(s, \mathbb{F}_{\ell})$ intersected with $\mathrm{GO}(s, \mathbb{F}_{\ell})^{\lambda}$. Elements in the fibre have distinct eigenvalues.  We see that a matrix $A$ in the fibre is stabilised by a maximal torus of dimension $r$, hence the fibre dimension here is minimised.

The complement, $V$, of $Y$ is a hypersurface in $\mathbb{A}^{r}$ of degree at most $8r$, obtained via the vanishing of the discriminant associated to a formal characteristic polynomial. We conclude the first estimate using \cite[pg 45]{bs}.

For the second estimate, we note that $\Psi^{-1}(V)$ is now a proper, closed subvariety of $\mathrm{GO}(s)^{\lambda}$ of degree $\exp(\mathrm{poly}(r))$ and codimension at least one. The number of its $\mathbb{F}_{\ell}$ -- rational points can be bounded via the Lang-Weil estimates \cite[Theorem 7.5]{cafure}, and can thus be avoided with high probability.
\end{proof}

\begin{proposition}
\label{orthprob}
 Let $f(T)\in Y(\mathbb{F}_{\ell}) \subset N_{r}^{\lambda}$ be the reversed characteristic polynomial of a matrix in $\mathrm{GO}(s, \mathbb{F}_{\ell})^{\lambda}$. Denote by $\Lambda$ the set of matrices in $\mathrm{GO}(s, \mathbb{F}_{\ell})^{\lambda}$ such that their reversed characteristic polynomial has a common factor with $f(T)$. Then $$\frac{\#\Lambda}{\# \mathrm{O}(s, \mathbb{F}_{\ell})}\leq O(1/\ell),$$ where the implied constant is independent of $\ell$ and of the form $ \exp({\mathrm{poly}(r)})$.
\end{proposition}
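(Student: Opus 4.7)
\medskip

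The plan is to reinterpret $\Lambda$ as a preimage under $\Psi$ and combine a dimension count with Lang--Weil. Write $W_f \subset \mathbb{A}^r \simeq N_r^\lambda$ for the subvariety parametrising those reciprocal polynomials $g(T)$ with $\gcd(f,g) \neq 1$. This $W_f$ is cut out by the single resultant equation $\mathrm{Res}_T(f(T), g(T))$, which is a polynomial of degree at most $s$ in the coefficients of $g$. Then $\Lambda = \Psi^{-1}(W_f) \cap \mathrm{GO}(s, \mathbb{F}_\ell)^\lambda$, and since $f$ is separable, $W_f$ is a proper hypersurface in $\mathbb{A}^r$.

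Next, I would split $\Lambda$ along the stratification $\mathbb{A}^r = Y \sqcup V$ from Lemma~\ref{orthlem}. The contribution over $V$ is directly controlled by Lemma~\ref{orthlem}: it gives $\#(\Psi^{-1}(V) \cap \mathrm{GO}(s,\mathbb{F}_\ell)^\lambda) \leq O(1/\ell) \cdot \#\mathrm{O}(s,\mathbb{F}_\ell)$ with implied constant of the form $\exp(\mathrm{poly}(r))$. For the contribution over $Y$, Lemma~\ref{orthlem} also gives that the fibres of $\Psi$ over $Y$ are equidimensional of dimension $\dim \mathrm{O}(s) - r$, so that
\[
\dim\bigl( \Psi^{-1}(W_f \cap Y) \cap \mathrm{GO}(s)^\lambda \bigr) \;\leq\; (r-1) + \bigl(\dim \mathrm{O}(s) - r\bigr) \;=\; \dim \mathrm{O}(s) - 1.
\]

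To apply Lang--Weil \cite[Theorem 7.5]{cafure}, I would then bound the degree of $\Psi^{-1}(W_f) \cap \mathrm{GO}(s)^\lambda$ as a subvariety of the affine matrix space $\mathrm{Mat}_s$. The similitude group $\mathrm{GO}(s)^\lambda$ is cut out by the quadratic relations $M^T Q M = \lambda Q$, while the additional condition $\mathrm{Res}_T(f(T), \det(I - TM)) = 0$ pulls back, through the characteristic polynomial map, to a polynomial of degree at most $s^2$ in the entries of $M$. B\'ezout then yields a degree bound of the form $\exp(\mathrm{poly}(r))$, independent of $\ell$, and Lang--Weil gives
\[
\#\bigl( \Psi^{-1}(W_f \cap Y)(\mathbb{F}_\ell) \cap \mathrm{GO}(s,\mathbb{F}_\ell)^\lambda \bigr) \;\leq\; \exp(\mathrm{poly}(r)) \cdot \ell^{\dim \mathrm{O}(s) - 1}.
\]
Combining this with the $V$-contribution and dividing by the lower bound $\#\mathrm{O}(s, \mathbb{F}_\ell) \geq \ell^{2r^2}(\ell-1)^r$ from the preceding lemma yields the claimed $O(1/\ell)$ bound with implied constant $\exp(\mathrm{poly}(r))$.

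The main technical obstacle is the degree bookkeeping for $\Psi^{-1}(W_f) \cap \mathrm{GO}(s)^\lambda$: one has to ensure both the B\'ezout-style degree bound and the error term in the Lang--Weil estimate remain of size $\exp(\mathrm{poly}(r))$ and are genuinely independent of $\ell$. Everything else is a straightforward matching of dimensions against the size of $\mathrm{O}(s,\mathbb{F}_\ell)$.
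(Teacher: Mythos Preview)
Your proposal is correct and follows essentially the same route as the paper: identify $\Lambda$ with the $\mathbb{F}_\ell$-points of $\Psi^{-1}(W_f)$ for the resultant hypersurface $W_f\subset\mathbb{A}^r$, bound its degree by $\exp(\mathrm{poly}(r))$, and apply the Lang--Weil estimate from \cite[Theorem~7.5]{cafure}. Your additional stratification along $Y\sqcup V$ to pin down the codimension is more explicit than the paper (which simply asserts that $\Psi^{-1}(W_f)$ is a proper closed subvariety), but it is not needed and does not change the argument.
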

\begin{proof}
Given $f(T)$, let $W_f\subset \mathbb{A}^{r}$ parametrise those polynomials which have a factor common with $f(T)$. It is a hypersurface, given by the vanishing of the formal resultant with $f(T)$ (see \cite[\S 3.3]{rsv}). Then, the set $\Lambda$ is just the set of $\mathbb{F}_{\ell}$ -- rational points of $\Psi^{-1}(W_f)\subset \mathrm{GO}(s)$, which is a proper, closed subvariety of degree at most $r^{\mathrm{poly}(r)}$. Then, we may conclude by the Lang-Weil estimates \cite[Theorem 7.5]{cafure} applied to $\Psi^{-1}(W_f)$.
\end{proof}

\section{Proof of Theorem~\ref{thm:main}}
We begin by recalling a version of Deligne's equidistribution theorem \cite{Weilii} due to Katz \cite[Theorem 9.7.13]{kasar}.
Let $U_{0}/\mathbb{F}_{q}$ be a smooth, affine, geometrically irreducible curve. Let $U$ be the base change to the algebraic closure. Pick a geometric point $u\rightarrow U$, lying over a closed point $u_{0}\in U(\mathbb{F}_{q})$ and denote by $\overline{\pi}_{1}:=\pi_{1}(U, u)$ the geometric \'etale fundamental group. Let $\pi_{1}$ denote the arithmetic fundamental group $\pi_{1}(U_{0}, u)$. For any closed point $v\in U(\mathbb{F}_{q})$, there exists an element $F_{q, v}\in \pi_{1}$ well-defined up to conjugacy, called the \textit{Frobenius element} at $v$. It is defined as follows. Writing $v=\mathrm{Spec}(\mathbb{F}_{q})\rightarrow U$, we obtain an induced map of fundamental groups
$$
\mathrm{Gal}(\overline{\mathbb{F}}_{q}/ \mathbb{F}_{q})\rightarrow \pi_{1}(U_{0}, v) \simeq \pi_{1}.
$$
The element $F_{q, v}\in \pi_{1}$ is simply the image in $\pi_{1}$ of the Frobenius element in $\mathrm{Gal}(\overline{\mathbb{F}}_{q}/ \mathbb{F}_{q})$ under the composition of the above morphisms.

Given a map $\rho: \pi_{1}\rightarrow G$ to a finite group, and a conjugacy-stable subset $C\subset G$, we seek to understand the proportion of points $v\in U(\mathbb{F}_{q^{w}})$ such that $\rho(F_{q^{w}, v})$ lies in $C$.

\begin{theorem}[Katz]
\label{katz}
Assume there is a commutative diagram

\[
\begin{tikzcd}
1 \arrow{r}  & \overline{\pi}_{1} \arrow {r} \arrow{d}[swap]{\overline{\rho}} & \pi_{1} \arrow{r} \arrow{d}{\rho} & \hat{\mathbb{Z}} \arrow{r} \arrow{d}{1\mapsto -\gamma} & 1 \\
1 \arrow{r} & \overline{G} \arrow{r} & G \arrow{r}{\mu} & \Gamma \arrow{r} & 1
\end{tikzcd}
\]
where $G$ is a finite group, $\Gamma$ is abelian, $\overline{\rho}$ is surjective and tamely ramified. Let $C\subset G$ be stable under conjugation by elements of $G$. Then
\begin{equation}
\label{maineq}
\left\vert \frac{\# \{v\in U(\mathbb{F}_{q^{w}}) \mid \rho(F_{q^{w}, v})\in C\}}{\#U(\mathbb{F}_{q^{w}})}-\frac{\#(C\cap G^{\gamma^{w}})}{\# \overline{G}}  \right\vert \,\leq\, \vert \chi(U)\vert \frac{\# G \sqrt{q^{w}}}{\#U(\mathbb{F}_{q^{w}})}\,,
\end{equation}
where $G^{\gamma^{w}}=\mu^{-1}(\gamma^{w})$ and $\chi(U)=\sum_{i=0}^{1}(-1)^{i}\dim \mathrm{H}^{i}(U, \mathbb{Q}_{\ell})$ is the $\ell$-adic Euler-Poincar\'e characteristic of $U$.
\end{theorem}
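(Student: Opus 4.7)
The plan is to follow Deligne's route through Weil II. After replacing $\mathbb{F}_q$ by $\mathbb{F}_{q^w}$ (and $\gamma$ by $\gamma^w$) I may assume $w=1$. For each irreducible $\overline{\mathbb{Q}}_\ell$-representation $\tau$ of $G$, the composition $\tau\circ\rho$ defines a lisse $\mathbb{Q}_\ell$-sheaf $\mathcal{F}_\tau$ on $U_0$ of rank $\dim\tau$, tame along $\overline{U}_0\setminus U_0$ by the tameness of $\overline{\rho}$, with Frobenius trace $v\mapsto\chi_\tau(\rho(F_{q,v}))$. Character orthogonality gives the expansion $1_C(g)=\frac{1}{\#G}\sum_{h\in C}\sum_\tau\overline{\chi_\tau(h)}\chi_\tau(g)$, so Grothendieck--Lefschetz applied to each $\mathcal{F}_\tau$ rewrites
\[ \#\{v\in U(\mathbb{F}_q)\mid\rho(F_{q,v})\in C\}\;=\;\frac{1}{\#G}\sum_{h\in C}\sum_\tau\overline{\chi_\tau(h)}\sum_{i=0}^{2}(-1)^i\,\mathrm{tr}\bigl(F_q,\mathrm{H}^i_c(U,\mathcal{F}_\tau)\bigr). \]

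I treat each cohomology group in turn. Affineness yields $\mathrm{H}^0_c(U,\mathcal{F}_\tau)=0$, while Poincar\'e duality identifies $\mathrm{H}^2_c(U,\mathcal{F}_\tau)\simeq(V_\tau)_{\overline{\pi}_1}(-1)$; by surjectivity of $\overline{\rho}$ and semisimplicity of $\overline{G}$-representations in characteristic zero this is $V_\tau^{\overline{G}}(-1)$, and the diagram forces $F_q$ to act on the quotient through $\Gamma$ via $\gamma$, so $\mathrm{tr}(F_q,\mathrm{H}^2_c(U,\mathcal{F}_\tau))=q\cdot\mathrm{tr}(\gamma,V_\tau^{\overline{G}})$. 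Expressing $\mathrm{tr}(\gamma,V_\tau^{\overline{G}})=\frac{1}{\#\overline{G}}\sum_{\tilde g\in G^{\gamma}}\chi_\tau(\tilde g)$ (trace on invariants as average over a coset), swapping the order of summation, and invoking $\sum_\tau\overline{\chi_\tau(h)}\chi_\tau(\tilde g)=\#C_G(h)\,\delta_{[h]=[\tilde g]}$ collapses the $\mathrm{H}^2_c$-contribution to $q\cdot\#(C\cap G^{\gamma})/\#\overline{G}$, which matches the main term of \eqref{maineq} once the Weil estimate $\#U(\mathbb{F}_q)=q+O(\sqrt{q})$ is absorbed into the final error.

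The $\mathrm{H}^1_c$-contribution is the error. Each $\mathcal{F}_\tau$ has finite monodromy, hence is pure of weight zero, so Deligne's Riemann hypothesis for curves gives $|\mathrm{tr}(F_q,\mathrm{H}^1_c(U,\mathcal{F}_\tau))|\leq \dim\mathrm{H}^1_c(U,\mathcal{F}_\tau)\cdot\sqrt{q}$. Tameness kills Swan conductors in Grothendieck--Ogg--Shafarevich, so $\chi_c(U,\mathcal{F}_\tau)=\chi(U)\dim\tau$, and together with the vanishing of $\mathrm{H}^0_c$ and the computation of $\mathrm{H}^2_c$ this yields $\dim\mathrm{H}^1_c(U,\mathcal{F}_\tau)=\dim V_\tau^{\overline{G}}-\chi(U)\dim\tau$. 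Plugging these into the displayed Lefschetz sum, bounding $|\chi_\tau(h)|\leq\dim\tau$, and collapsing via Burnside ($\sum_\tau(\dim\tau)^2=\#G$) together with Frobenius reciprocity ($\sum_\tau(\dim\tau)\dim V_\tau^{\overline{G}}=[G:\overline{G}]$) packages the total error into $|\chi(U)|\#G\sqrt{q}$, yielding \eqref{maineq} after division by $\#U(\mathbb{F}_q)$.

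The hard part is the $\mathrm{H}^2_c$ bookkeeping: verifying that the double summation of character inner products collapses to the coset-restricted constant $\#(C\cap G^{\gamma})/\#\overline{G}$ rather than the whole-group average $\#C/\#G$. This relies essentially on the fact that $\Gamma$ is abelian (so $\overline{G}$-cosets are preserved under $G$-conjugation, and $\rho(F_{q,v})$ lies automatically in $G^{\gamma}$) and on the diagram-encoded identification of the $F_q$-action on $V_\tau^{\overline{G}}$ with the action of $\gamma$, both hypotheses featuring prominently in the statement; without them the main-term constant would be off by a factor of $[G:\overline{G}]$.
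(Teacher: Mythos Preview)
The paper does not actually prove this statement: its entire proof is the single line ``See \cite[Theorem 4.1]{chav}.'' So there is nothing to compare against except the literature, and what you have written is precisely the standard Deligne--Katz argument (character expansion, Grothendieck--Lefschetz, Weil II on $\mathrm{H}^1_c$, Grothendieck--Ogg--Shafarevich for the Euler characteristic) that one finds in Chavdarov's paper and in Katz--Sarnak. Your identification of the $\mathrm{H}^2_c$ contribution with the coset average $q\cdot\#(C\cap G^{\gamma})/\#\overline{G}$ via second orthogonality is correct and is indeed the place where the abelianness of $\Gamma$ is used.

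One bookkeeping point is worth flagging. Your final ``packaging'' step bounds $|\chi_\tau(h)|\le\dim\tau$ and then collapses via $\sum_\tau(\dim\tau)^2=\#G$ and $\sum_\tau(\dim\tau)\dim V_\tau^{\overline{G}}=[G:\overline{G}]$; carrying this through gives an error of size $\sqrt{q}\bigl([G:\overline{G}]+|\chi(U)|\,\#G\bigr)\cdot\#C/\#G$ rather than exactly $|\chi(U)|\,\#G\,\sqrt{q}$, and the ``absorb $q$ versus $\#U(\mathbb{F}_q)$'' step contributes a further $|q-\#U(\mathbb{F}_q)|$. These extra terms are harmless in the applications (where $|\chi(U)|\ge 1$ and $\#G\ge [G:\overline{G}]$, so they are dominated by the stated bound up to a factor of $2$), but if you want the constant \emph{exactly} as in \eqref{maineq} you would need to organise the estimate more carefully---for instance by working directly with the pushforward of the constant sheaf along the \'etale $\overline{G}$-cover rather than decomposing into all irreducibles of $G$. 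This is a cosmetic issue, not a genuine gap: the mechanism of your proof is the correct one.
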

\begin{proof}
    See \cite[Theorem 4.1]{chav}.
\end{proof}
With the above in mind, we can now prove our effective gcd theorem. We recall our assumptions. Let $\mathcal{X}\subset \mathbb{P}^{N}$ be a smooth, projective geometrically irreducible variety of dimension $n$ and degree $D$, over a number field $K$. Let $\mathfrak{p}$ be a prime of good reduction, write $\mathbb{F}_{q}:=\mathcal{O}_{K}/\mathfrak{p}$ and denote the variety $X/\mathbb{F}_{q}$ upon reduction. Let $(X_{t})_{t\in\mathbb{P}^{1}}$ be a Lefschetz pencil of hyperplane sections on $X$. Denote by $Z\subset \mathbb{P}^{1}$ the finite set of nodal fibres and by $U=\mathbb{P}^{1}\setminus Z$, the subscheme parameterising the smooth fibres. 

\begin{reptheorem}{thm:main}
There exists a polynomial $\Phi(x)\in \mathbb{Z}[x]$ independent of $D$, such that for any extension $\mathbb{F}_{Q}/ \mathbb{F}_{q}$ with
    $$
    [\mathbb{F}_Q: \mathbb{F}_{q}] >\Phi(D),
    $$
    we have for any $u_{1}, u_{2}\in U(\mathbb{F}_{Q})$ chosen uniformly at random,
    $$
    P_{n-1}(X/ \mathbb{F}_{Q}, T)=\mathrm{gcd}\left(P_{n-1}(X_{u_{1}}/\mathbb{F}_{Q}, T),  P_{n-1}(X_{u_{2}}/\mathbb{F}_{Q}, T) \right);
    $$
    with probability $>2/3$.
\end{reptheorem}
\begin{proof}
Let $\ell$ be a large enough prime such that the groups $\mathrm{H}^{i}(X, \mathbb{Z}_{\ell})$ are all torsion-free. We can choose $\ell$ to be $\Omega(D^{2^{4N^{2}}})$ by the proof of Corollary~\ref{cor:ell}. Consider now the locally constant sheaf $R^{1}\pi_{\star}\mathbb{Z}_{\ell}\vert_{U}$ on $U$. It has as subsheaf $\mathcal{E}^{\mathbb{Z}_{\ell}}$ the sheaf of vanishing cycles. Write $\mathcal{E}^{\ell}=\mathcal{E}^{\mathbb{Z}_{\ell}}\otimes \mathbb{F}_{\ell}$ for the locally constant sheaf of mod -- $\ell$ vanishing cycles. Let $\rho_{\ell}: \pi_{1}(U_{0}, u)\rightarrow \mathrm{GL}(s, \mathbb{F}_{\ell})$ be the associated representation, and denote by $\overline{\rho}_{\ell}:=\rho_{\ell}\vert \pi_{1}(U, u)$ the restriction to the geometric fundamental group. We begin by assuming that the sheaf $\mathcal{E}^{\mathbb{Z}_{\ell}}$ has big monodromy. Indeed by the results of \cite[4.4]{Weilii}, we know that the monodromy is either big or finite, with the latter only happening in the orthogonal case.\\ 

We begin with the case of symplectic monodromy, i.e., $n$ is even, and by Theorem~\ref{thm:bigm}, the image of $\overline{\rho}_{\ell}$ is $\mathrm{Sp}(s, \mathbb{F}_{\ell})$.  We seek to apply Theorem~\ref{katz} to this setup with $\overline{G}=\mathrm{Sp}(s, \mathbb{F}_{\ell})$. Let $\mathbb{F}_{Q}/\mathbb{F}_{q}$ be an extension where $Q:= q^{w}$ and choose $u_{1}\in U(\mathbb{F}_{Q})$ randomly. We estimate the number of $v\in U(\mathbb{F}_{Q})$ such that $P(\mathcal{E}_{v}/\mathbb{F}_{Q},T)$ is coprime to $f(T):=P(\mathcal{E}_{u_{1}}/\mathbb{F}_{Q},T)$. Write $\overline{f}(T):=f(T) \ \mathrm{mod} \ \ell$.
    
     Denote by $C\subset \mathrm{GSp}(2r, \mathbb{F}_{\ell})$ the subset of matrices with characteristic polynomial not coprime to $\overline{f}(T)$. It is stable under conjugation by elements from $\mathrm{GSp}(2r, \mathbb{F}_{\ell})$. Applying Theorem~\ref{katz} to $C$, we get
$$
 \frac{\#\{v\in U(\mathbb{F}_{Q}) \ \vert \ \rho_{\ell}(F_{Q, v})\in C \} }{\#U(\mathbb{F}_{Q})} \;\leq\; \frac{\#(C\cap \mathrm{GSp}(2r, \mathbb{F}_{\ell})^{\gamma^{w}})}{\#\mathrm{Sp}(2r, \mathbb{F}_{\ell})}+ \vert \chi(U)\vert \frac{ \#\mathrm{GSp}(2r, \mathbb{F}_{\ell})\sqrt{q^{w}}}{\#U(\mathbb{F}_{Q})} \;.
$$
By Lemma~\ref{symplectic} (since $\ell> 119r^2$), the first summand on the RHS is $\leq 1/4$. From the calculation\footnote{see \cite[\href{https://stacks.math.columbia.edu/tag/03RR}{Tag 03RR}]{stacks-project}} of the \'{e}tale cohomology of $U$ (the projective line with $\#Z$ punctures), we deduce that $\vert \chi(U)\vert\leq\#Z\leq D^{N+1}$. Further, we see that $s$, which is the dimension of the space of vanishing cycles, is bounded above by the sum of the Betti numbers of the hyperplane section of $X$, which by Theorem~\ref{thm:milnor}, is at most $ND(2D - 1)^{2N+1}$. Therefore, for $q^{w}>2D^{N+1}$, we have
$$
\vert \chi(U)\vert \frac{ \#\mathrm{GSp}(s, \mathbb{F}_{\ell})\sqrt{q^{w}}}{\#U(\mathbb{F}_{Q})} \;\leq\; D^{N+1}\ell^{2s^{2}+s+1}\frac{\sqrt{q^{w}}}{q^{w}-D^{N+1}} \;\leq\; D^{N+1} D^{2^{4N^{2}}\cdot 4 N^{2}D^{2}(2D)^{6N}}\frac{\sqrt{q^{w}}}{q^{w}/2} \;.
$$
In particular, if $$Q=q^{w} > \Omega\left(D^{2^{8N^{2}}\cdot N^{2} \cdot D^{4N}}\right),$$ we have
$$
\frac{\#\{v\in U(\mathbb{F}_{Q}) \ \vert \ \rho_{\ell}(F_{Q, v})\not\in C \} }{\#U(\mathbb{F}_{Q})} \;>\; 2/3 \,,
$$
which completes the proof for the symplectic case. \\ \\
Now, we deal with the big orthogonal case, i.e., $n$ is odd and the image of $\overline{\rho}_{\ell}$ is one of the subgroups $\overline{\mathrm{G}}$ of $\mathrm{O}(s, \mathbb{F}_{\ell})$ of index at most two in Theorem~\ref{thm:bigm}. \footnote{We may assume the orthogonal monodromy is big by the remark after Theorem~\ref{thm:bigm}.} Denote by $\mathrm{G}$ its extension by an appropriate subgroup of $\mathbb{F}_{\ell}^{*}$ via (\ref{exac}). Let $C'\subset \mathrm{GO}(V, \mathbb{F}_{\ell})$ be the subset of matrices with characteristic polynomial  having distinct roots. Then, applying Theorem~\ref{katz}, we see
 $$\frac{\#\{v\in U(\mathbb{F}_{Q}) \ \vert \ \rho_{\ell}(F_{Q, v})\in C' \} }{\#U(\mathbb{F}_{Q})}\geq \frac{\#(C'\cap \mathrm{G}^{\gamma^{w}})}{\#\overline{\mathrm{G}}} - \vert \chi(U)\vert \frac{ \#\mathrm{G}\sqrt{q^{w}}}{\#U(\mathbb{F}_{Q})}.$$
 By Lemma~\ref{orthlem}, the first term of the RHS can be maximised with growing $\ell$, and the error term is minimised similar to the symplectic case. Now, for another trial $v'\in U(\mathbb{F}_{Q})$ chosen uniformly at random, we maximise the probability of the associated characteristic polynomial being coprime to that of the earlier trial via a similar estimate using Proposition~\ref{orthprob}. 
\end{proof}

\section*{Acknowledgements}
We thank Saugata Basu for conversations regarding the bound in Corollary~\ref{cor:simple torsion bound}. We thank Alan Lauder and George Walker for making  the thesis \cite{walker} available. We thank Nitin Saxena, T.N. Venkataramana and Arvind Nair for discussions. We are grateful to the organisers of the `Mordell Conjecture 100 years later' conference at MIT and the Simons Foundation for travel support. Parts of this work were conceived during the conference. H.J.K. is supported by the AMS-Simons Travel Grant.  M.V. is supported by a C3iHub research fellowship.
\bibliographystyle{alpha}
\bibliography{gcd.bib}

\end{document}